\newtheorem{thm}{Theorem}[section]
\newtheorem{theorem}[thm]{Theorem}
\newtheorem{corollary}[thm]{Corollary}
\newtheorem{problem}[thm]{Problem}
\newtheorem{example}[thm]{Example}
\newtheorem{lemma}[thm]{Lemma}
\newtheorem{proposition}[thm]{Proposition}
\theoremstyle{remark}
\newtheorem{remark}[thm]{Remark}
\newcommand{\norm}[1]{\|#1\|}
\newcommand{\RR}{\mathbb R}
\newcommand{\CC}{\mathbb C}
\newcommand{\HH}{\mathbb H}
\newcommand{\cA}{\mathcal A}
\newcommand{\Tr}{\mbox{Tr}}
\newcommand{\Nul}{\mbox{Null}}
\newcommand{\inner}[2]{\langle #1,#2 \rangle}
\newcommand{\spann}{\mbox{\rm span}}
\newcommand{\abs}[1]{\left| #1 \right|}
\title{Phase retrieval by projections}
\author[Cahill, Casazza, Peterson, Woodland
 ]{Jameson Cahill, Peter G. Casazza, Jesse Peterson and Lindsey Woodland}
\address{Department of Mathematics, University
of Missouri, Columbia, MO 65211-4100}
\thanks{The first, second, and fourth authors were supported by
 NSF DMS 1008183;  NSF ATD 1042701;  AFOSR  DGE51; and FA9550-11-1-0245}
\email{Casazzap@missouri.edu; jdpq6c@mail.missouri.edu; lmwvh4@mail.missouri.edu}
\keywords{}
\subjclass[2000]{}
\begin{document}

\begin{abstract}
The problem of recovering a vector from the absolute values of its inner products against a family of measurement vectors has been well studied in mathematics and engineering.  A generalization of this phase retrieval problem also exists in engineering: recovering a vector from measurements consisting of norms of its orthogonal projections onto a family of subspaces.  There exist semidefinite programming algorithms to solve this problem, but much remains unknown for this more general case.  Can families of subspaces for which such measurements are injective be completely classified?  What is the minimal number of subspaces required to have injectivity?  How closely does this problem compare to the usual phase retrieval problem with families of measurement vectors?  In this paper, we answer or make incremental steps toward these questions.  We provide several characterizations of subspaces which yield injective measurements, and through a concrete construction, we prove the surprising result that phase retrieval can be achieved with $2M-1$ projections of arbitrary rank
in $\HH_M$.
  Finally we present several open problems as we discuss issues unique to the phase retrieval problem with subspaces.
\end{abstract}

\maketitle
%%%%%%%%%%%%%%%%%%%%%%%%%%%%%%%%%%%%%%%%%%%%%%%%%%%%%%%%%%%%%%%%%%%%%%%%%%%%%%%%%%%%%%%%%%%%%%%%%%%%%%%%%%%%%%%%%%%%%%%%%%%%%%%
%%%%%%%%%%%%%%%%%%%%%%%%%%%%%%%%%%%%%%%%%%%%%%%%%%%%%%%%%%%%%%%%%%%%%%%%%%%%%%%%%%%%%%%%%%%%%%%%%%%%%%%%%%%%%%%%%%%%%%%%%%%%%%%

\section{Introduction}
In many engineering applications signals pass through linear systems, but in this process the recorded phase information can be lost or distorted.  Examples of this problem occur in speech recognition \cite{BR, RJ, PDH}, quantum state tomography \cite{RBSC}, and optics applications such as X-rays, crystallography, and electron microscopy \cite{BM, F78, F82}. {\it Phase retrieval} is the problem of recovering a signal from the absolute values of linear measurement coefficients called {\it intensity measurements}.  Note multiplying a signal by a global phase factor does not effect these coefficients, so we seek signal recovery mod a global phase factor.

There are two main approaches to this problem of phase retrieval.  One is to restrict the problem to a subclass of signals on which the intensity measurements become injective.  The other is to use a larger family of measurements so that the intensity measurements map any signal injectively.  The latter approach in phase retrieval first appears in \cite{BCE} where the authors examine injectivity of intensity measurements for finite Hilbert spaces.  The authors completely characterize measurement vectors in the real case which yield such injectivity, and they provide a surprisingly small upper bound on the minimal number of measurements required for the complex case.  This sparked an incredible volume of current phase retrieval research \cite{ABFM, BBCE, BCMN, CESV, CL, CSV, DH, EM} focused on algorithms and conditions guaranteeing injective and stable intensity measurements.

Given a signal $x$ in a Hilbert space, intensity measurements may also be thought of as norms of $x$ under rank one projections.  Here the spans of measurement vectors serve as the one dimensional range of the projections.  In some applications however, a signal must be reconstructed from the norms of higher dimensional components.  In X-ray crystallography for example, such a problem arises with crystal twinning \cite{D}.  In this scenario, there exists a similar phase retrieval problem: given subspaces $\{W_n\}_{n=1}^N$ of an $M$-dimensional Hilbert space $\HH_M$ and orthogonal projections $P_n:\HH_M\rightarrow W_n$, can we recover any $x\in \HH_M$ (up to a global phase factor) from the measurements $\{\norm{P_nx}\}_{n=1}^N$?  This problem was recently studied in \cite{BE} where the authors use semidefinite programming to develop a reconstruction algorithm for when the $\{W_n\}_{n=1}^N$ are equidimensional random subspaces.  Most results using random intensity measurements require the cardinality of measurements to scale linearly with the dimension of the signal space along with an additional logarithmic factor \cite{CSV}, but this logarithmic factor was recently removed in \cite{CL}.  Similarly, signal reconstruction from the norms of equidimensional random subspace components are possible with the cardinality of measurements scaling linearly with the dimension \cite{BE}.

In contrast to these results concerning the phase retrieval problem using subspaces, and much like \cite{BCE}, we seek to better characterize the subspaces $\{W_n\}_{n=1}^N$ of $\HH_M$ for which the measurements $\{\norm{P_nx}\}_{n=1}^N$ are injective for all $x\in \HH_M$.  To set notation, given subspaces $\{W_n\}_{n=1}^N$ of $\RR^M$ with orthogonal projections $P_n:\RR^M\rightarrow W_n$, we consider the measurements $\cA:\RR^M/\{\pm 1\}\rightarrow \RR^N$ given by
\begin{equation}\label{A}
	\cA(x)(n):=\norm{P_nx}^2.
\end{equation}
The phrase ``{\it $\{W_n\}_{n=1}^N$ allows phase retrieval}'' will be synonomous with $\cA$ being injective.  For the well studied case of $\dim W_n=1$ for all $n$, whether or not $\cA$ is injective shall be referred to as the {\it classical phase retrieval} problem.  As shown in the notation above, our primary focus will be $\RR^M$, but some results hold for the complex case as well.  When giving a result for $\CC$ instead of $\RR$, we will make this explicit.

While we provide several characterizations of subspaces $\{W_n\}_{n=1}^N$ for which $\cA$ is injective, several unique challenges remain, and this problem has significant potential for further study.  In this current paper, we present the following.  In section \ref{sec1} we will discuss classical phase retrieval and connect classical phase retrieval to phase retrieval with subspace components.  We also provide an upper bound on the minimal number of subspaces required for phase retrieval (in $\RR^M$ and $\CC^M$) by concretely constructing subspaces which allow phase retrieval.  In section \ref{sec2} we will provide several characterizations for subspaces $\{W_n\}_{n=1}^N$ which allow phase retrieval.  Section \ref{sec3} further discusses additional properties of these subspaces.  Finally section \ref{sec4} presents several open problems and incremental steps toward solving them.  This section also includes discussions of future work concerning the phase retrieval problem with subspace components.
For a background on frame theory we recommend \cite{CK}.

\section{Phase retrieval with subspace components and classical phase retrieval} \label{sec1}

As the phase retrieval problem of using norms of subspace components is a generalization of classical phase retrieval, several tools are pertinent to both problems.  In fact, our first approach to the phase retrieval problem from subspace components will be to reduce it to the classical case.  To this end, we discuss existing characterziations for classical phase retrieval.

Given a family of vectors $\Phi=\{\varphi_n\}_{n=1}^N$ in $\RR^M$, the {\it spark} of $\Phi$ is defined as the cardinality of the smallest linearly dependent subset of $\Phi$.  When $\mbox{spark}(\Phi)=M+1$ every subset of size $M$ is linearly independent, and $\Phi$ is said to be {\it full spark}.  Corollary 2.6 in \cite{BCE} shows when $N\geq 2M-1$, $\Phi$ allows phase retrieval when $\Phi$ is full spark.  This condition is not necessary, however, so \cite{BCE} also introduces a necessary and sufficient characterization called the {\it complement property} in modern terminology.  The family of vectors $\Phi=\{\varphi_n\}_{n=1}^N$ in $\RR^M$ is said to have the complement property if for every $I\subseteq \{1,\ldots,N\}$ either $\{\varphi_n\}_{n\in I}$ or $\{\varphi_n\}_{n\in I^c}$ span $\RR^M$. Notice, that if a family of vectors $\Phi$ is full spark, then it necessarily has the complement property. 

In Theorem 2.8 of \cite{BCE}, the authors show that vectors $\Phi=\{\varphi_n\}_{n=1}^N$ in $\RR^M$ allow phase retrieval if and only if $\Phi$ has the complement property.  While this proof leverages algebraic geometry, this result also follows from a short, elementary proof.  This was already noted in \cite{BCMN}, and we provide a proof as well since the result is quick and highlights the vectors which may cause phase retrieval to fail.  This general type of argument is used in later results and proofs.

\begin{theorem} \label{original}
A family of vectors $\Phi=\{\varphi_n\}_{n=1}^N$ in $\RR^M$ allow phase retrieval if and only if $\Phi$ has the complement property.  In particular, if $\Phi=\{\varphi_n\}_{n=1}^N$ is full spark then it gives phase retrieval.
\end{theorem}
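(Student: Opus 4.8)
The plan is to prove both directions of the equivalence directly, exploiting a simple polarization-type identity. The key observation is that for any two vectors $x,y \in \RR^M$ and any measurement vector $\varphi_n$, we have the factorization
\begin{equation*}
\abs{\inner{x}{\varphi_n}}^2 - \abs{\inner{y}{\varphi_n}}^2 = \inner{x-y}{\varphi_n}\inner{x+y}{\varphi_n}.
\end{equation*}
This identity is the engine of the whole argument: it converts the statement ``$x$ and $y$ give the same intensity measurements'' into an orthogonality condition involving the sum $x+y$ and the difference $x-y$.

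First I would prove the forward direction by contraposition. Suppose $\Phi$ fails the complement property, so there is a partition $I \sqcup I^c$ of $\{1,\ldots,N\}$ such that neither $\{\varphi_n\}_{n\in I}$ nor $\{\varphi_n\}_{n \in I^c}$ spans $\RR^M$. Then I can pick a nonzero vector $u$ orthogonal to $\spann\{\varphi_n\}_{n\in I}$ and a nonzero vector $v$ orthogonal to $\spann\{\varphi_n\}_{n \in I^c}$. Setting $x = u+v$ and $y = u-v$, the identity above gives $\abs{\inner{x}{\varphi_n}}^2 - \abs{\inner{y}{\varphi_n}}^2 = \inner{2v}{\varphi_n}\inner{2u}{\varphi_n}$, and for every index $n$ at least one of the two factors vanishes (depending on whether $n\in I$ or $n\in I^c$). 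Hence $x$ and $y$ produce identical measurements. To conclude these are genuinely distinct signals mod global phase, I would note $x \neq \pm y$ since $u,v$ are both nonzero, so injectivity fails.

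For the converse, I would again argue contrapositively: assume phase retrieval fails, so there exist $x \neq \pm y$ with $\abs{\inner{x}{\varphi_n}} = \abs{\inner{y}{\varphi_n}}$ for all $n$. Applying the same identity, $\inner{x-y}{\varphi_n}\inner{x+y}{\varphi_n} = 0$ for every $n$, so each index $n$ lands in the set $I = \{n : \inner{x-y}{\varphi_n}=0\}$ or its complement $I^c \subseteq \{n : \inner{x+y}{\varphi_n}=0\}$. Since $x \neq \pm y$, both $x-y$ and $x+y$ are nonzero, which forces $\{\varphi_n\}_{n\in I}$ to lie in the hyperplane $(x-y)^\perp$ and $\{\varphi_n\}_{n\in I^c}$ to lie in $(x+y)^\perp$; neither family spans $\RR^M$, so the complement property fails. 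The final sentence of the statement is then immediate, since full spark implies the complement property (any set and its complement cannot both have fewer than $M$ elements failing to span when every $M$-subset is independent).

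The argument is essentially routine once the polarization identity is in hand; the only subtlety I anticipate is bookkeeping the ``mod global phase'' equivalence correctly, namely ensuring that the constructed counterexamples satisfy $x \neq \pm y$ rather than merely $x\neq y$, and conversely that the failure of injectivity is witnessed by a genuine pair $x \neq \pm y$ rather than a trivial one. This is where the real case is cleaner than the complex case: over $\RR$ the global phase ambiguity is just the sign $\pm 1$, matching exactly the two factors $x\pm y$ in the identity, so no additional casework is needed.
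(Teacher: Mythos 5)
Your proposal is correct and is essentially the paper's own argument in contrapositive dress: your vectors $x=u+v$, $y=u-v$ built from the two orthogonal complements are exactly the paper's counterexample for the failure direction, and your index set $I=\{n:\inner{x-y}{\varphi_n}=0\}$ is precisely the paper's sign-agreement set $\{n:\inner{x}{\varphi_n}=\inner{y}{\varphi_n}\}$, with your factorization identity merely making explicit what the paper does implicitly. The only organizational difference is that the paper proves the implication from the complement property to injectivity directly (spanning forces $x=y$ or $x=-y$) while you prove its contrapositive, so the mathematical content is the same.
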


\begin{proof}
Suppose $\Phi$ has the complement property, and let $x,y\in \RR^M$ satisfy $\abs{\inner{x}{\varphi_n}}=\abs{\inner{y}{\varphi_n}}$ for all $n=1, \ldots,N$.  Define $I=\{n:\inner{x}{\varphi_n}=\inner{y}{\varphi_n}\}$.  Then either $\{\varphi_n\}_{n\in I}$ or $\{\varphi_n\}_{n\in I^c}$ span $\RR^M$.   Suppose the first set spans.  Then $\{\varphi_n\}_{n\in I}$ contains a basis for $\RR^M$ and $\inner{x}{\varphi_n}=\inner{y}{\varphi_n}$ for $n\in I$ implies $x=y$.  If instead $\{\varphi_n\}_{n\in I^c}$ spans, we have $x=-y$, and we see $\cA$ is injective.

Next we prove the converse statement.  If $\Phi$ does not have the complement property, then there exists an $I\subset\{1,\ldots,N\}$ such that neither $\{\varphi_n\}_{n \in I}$ nor $\{\varphi_n\}_{n \in I^c}$ span $\RR^M$.  Choose $0\neq x\in [\spann\{\varphi_n\}_{n \in I}]^\perp$ and $0\neq y\in [\spann\{\varphi_n\}_{n \in I^c}]^\perp$.  So $x+y \not= \pm \left(x-y\right)$ while
\[
	\abs{\inner{x+y}{\varphi_n}}=\abs{\inner{x-y}{\varphi_n}}
\]
for all $n=1,\ldots,N$, and hence $\Phi$ does not allow phase retrieval.
\end{proof}

Since any family of vectors with the complement property in an $M$-dimensional space has at least $2M-1$ vectors, the minimal number of measurement vectors for phase retrieval in $\RR^M$ is $2M-1$.  At first glance one may be inclined to suggest the minimal number of subspaces required for $\cA$ to be injective is larger than $2M-1$.  Specifically, for a one dimensional space $W_n$, $\cA(x)(n)=\norm{P_nx}$ can come from only $\pm P_nx$.  For higher dimensional $W_n$, there is a continuum of $P_nx$ which give measurements $\cA(x)(n)=\norm{P_nx}$, and thus we appear to have less information in the subspace case.  This intuition is flawed however as we only care about $x$ as the pre-image of $\cA$ and not $P_nx$ as the pre-image under the norm.  We will in fact show $\cA$ can be injective with $2M-1$ subspaces, and we begin with a couple of lemmas.

\begin{lemma} \label{lem1}
Let $\{\varphi_n\}_{n=1}^N$ be full spark in an $M$-dimensional space.  Let $\{\psi_m\}_{m=1}^M$ be an orthonormal basis for the $M$-dimensional space constructed as follows:  Let $\psi_1$ be a random vector.  Then $\psi_2$ is chosen at random from $[span(\psi_1)]^{\perp}$.  Continue so that $\psi_k$ is chosen at random from $[span(\{\psi_n\}_{n=1}^{k-1})]^{\perp}$.  Then $\{\varphi_n\}_{n=1}^N \cup \{\psi_m\}_{m=1}^M$ is full spark with probability $1$.
\end{lemma}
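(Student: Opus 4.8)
The plan is to show that the "bad" event—some subset of size $M$ from $\{\varphi_n\}_{n=1}^N \cup \{\psi_m\}_{m=1}^M$ being linearly dependent—occurs only on a set of measure zero, exploiting the sequential nature of the construction. Since $\{\varphi_n\}_{n=1}^N$ is already full spark, any dependent subset of size $M$ must include at least one $\psi_m$. The key observation is that linear dependence of a fixed set of $M$ vectors is equivalent to the vanishing of the determinant of the matrix formed by those vectors, which is a polynomial in the coordinates of the $\psi_m$'s. A polynomial that is not identically zero vanishes only on a set of Lebesgue measure zero, so the entire argument reduces to showing that for each fixed bad subset, the relevant determinant is not the zero polynomial.

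First I would set up the induction on the construction of the $\psi_m$'s. The cleanest approach is to condition sequentially: having chosen $\psi_1, \ldots, \psi_{k-1}$ so that the full collection built so far is full spark (with probability $1$), I would show that choosing $\psi_k$ at random from the $(M-k+1)$-dimensional subspace $[\spann(\{\psi_n\}_{n=1}^{k-1})]^\perp$ preserves full spark almost surely. For a fixed subset $S$ of the existing vectors together with $\psi_k$, of total size $M$, I need the set $S \cup \{\psi_k\}$ to be linearly independent for generic $\psi_k$ in the orthogonal complement. This fails for all choices of $\psi_k$ only if every vector in that complement lies in $\spann(S)$, i.e. only if $[\spann(\{\psi_n\}_{n=1}^{k-1})]^\perp \subseteq \spann(S)$.

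The heart of the argument—and the step I expect to be the main obstacle—is verifying that this containment cannot happen, so that the determinant polynomial in the free coordinates of $\psi_k$ is genuinely nonzero. The idea is that the orthogonal complement has dimension $M-k+1$, while $\spann(S)$ (where $S$ has size $M-1$) has dimension at most $M-1$; but dimension counting alone does not immediately give a contradiction since $M-1$ can exceed $M-k+1$. The resolution requires using that $S$ itself is a linearly independent set drawn from a full-spark family together with the previously chosen orthonormal vectors $\psi_1, \ldots, \psi_{k-1}$. I would argue that if the whole complement were swallowed by $\spann(S)$, then in particular each remaining orthonormal direction could be expressed through $S$, forcing a linear dependence among the already-established full-spark collection, contradicting the inductive hypothesis. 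Making this dimension-and-genericity bookkeeping precise—tracking exactly which vectors lie in $S$ versus the complement—is the delicate part.

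Once the determinant is shown to be a nonzero polynomial for each of the finitely many bad subsets $S$, I would conclude by taking a finite union: each bad subset contributes a measure-zero locus in the coordinates of $\psi_k$, the finite union is still measure zero, so $\psi_k$ avoids all of them with probability $1$. Chaining these conditional probabilities across $k = 1, \ldots, M$ and invoking the full-spark assumption on $\{\varphi_n\}$ as the base case yields that $\{\varphi_n\}_{n=1}^N \cup \{\psi_m\}_{m=1}^M$ is full spark with probability $1$.
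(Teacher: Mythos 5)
Your reduction is set up correctly: conditioning on $\psi_1,\ldots,\psi_{k-1}$, the random $\psi_k\in W_{k-1}:=[\spann(\{\psi_i\}_{i=1}^{k-1})]^\perp$ lands in $\spann(S)$ with positive probability if and only if $W_{k-1}\subseteq\spann(S)$, so everything hinges on excluding that containment, exactly as in the paper. But your proposed mechanism for excluding it --- that $W_{k-1}\subseteq\spann(S)$ would force a linear dependence among the already-constructed full spark collection, contradicting the inductive hypothesis --- is false, and this is precisely the crux. The vectors of $W_{k-1}$ that get ``expressed through $S$'' are not members of the collection built so far, so no dependent $M$-subset of $\{\varphi_n\}_{n=1}^N\cup\{\psi_1,\ldots,\psi_{k-1}\}$ results. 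A concrete counterexample: in $\RR^3$ with standard basis $e_1,e_2,e_3$, take $\varphi_1=(1,1,1)$, $\varphi_2=(1,1,-1)$, and suppose the first two steps produced $\psi_1=e_1$, $\psi_2=e_2$. One checks all four triples to see that $\{\varphi_1,\varphi_2,\psi_1,\psi_2\}$ is full spark, yet $W_2=\spann(e_3)\subseteq\spann(\varphi_1,\varphi_2)$ since $e_3=\tfrac{1}{2}(\varphi_1-\varphi_2)$, and the forced next choice $\psi_3=\pm e_3$ makes $\{\varphi_1,\varphi_2,\psi_3\}$ dependent. So the hypothesis ``full spark so far'' is consistent with the containment you must rule out, your determinant polynomial for $S=\{\varphi_1,\varphi_2\}$ vanishes identically on $W_2$, and your inductive step cannot be completed from the hypothesis you carry. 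The containment is indeed a measure-zero event in the earlier random choices $\psi_1,\ldots,\psi_{k-1}$, but proving that is the actual content of the lemma, and full spark alone does not encode it.

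The paper's proof repairs exactly this by strengthening the inductive invariant: it carries the statement that for every $(M-1)$-subset $A$ of the vectors chosen through stage $k$ one has $\dim(\spann(A)\cap W_k)\leq(M-k)-1$, which is equivalent to the non-containment $W_k\not\subseteq\spann(A)$ you need. The inductive step then uses the randomness of $\psi_k$ a second time, not merely the full spark property: when $\psi_k\notin A$, if $\spann(A)\cap W_{k-1}$ equalled $W_k$, then $\psi_k$ would be forced into the one-dimensional space $[\spann(A)\cap W_{k-1}]^\perp\cap W_{k-1}$ determined by $A$ and $W_{k-1}$, a probability-zero event for $\psi_k$ uniform in the $(M-k+1)$-dimensional space $W_{k-1}$; the remaining case $\psi_k\in A$ is reduced to the first case by swapping $\psi_k$ for a $\varphi\notin A$ and comparing dimensions. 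To fix your write-up you would need to propagate this stronger invariant (or an equivalent genericity statement about the earlier $\psi_i$'s) through the induction; the polynomial/measure-zero framing is fine as scaffolding, but the non-vanishing of the determinant in the free coordinates of $\psi_k$ cannot be certified the way you claim.
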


\begin{proof}
For $1\leq k <M$, if $\{\varphi_n\}_{n=1}^N \cup \{\psi_m\}_{m=1}^{k}$ is full spark, and we desire $\{\varphi_n\}_{n=1}^N \cup \{\psi_m\}_{m=1}^{k+1}$ to be full spark, we must prove $\psi_{k+1}$ does not lie in the span of any $M-1$ vectors from $\{\varphi_n\}_{n=1}^N \cup \{\psi_m\}_{m=1}^{k}$.  Pick any such $M-1$ vectors, and denote this set by $A$.  Let $W_k:=[\spann(\{\psi_m\}_{m=1}^k)]^\perp$, and choose $\psi_{k+1}$ as a random unit norm vector from this $M-k$ dimensional space.  Then $\{\varphi_n\}_{n=1}^N \cup \{\psi_m\}_{m=1}^{k+1}$ is full spark if and only if $\psi_{k+1}\notin \spann(A)$, and this will be true with probability $1$ if and only if 
\begin{equation} \label{show}
	\dim(\spann(A)\cap W_k)\leq (M-k)-1.
\end{equation}
This follows because $\spann(A)\cap W_k$ is a subset of the $(M-k)$-dimensional space $W_k$ and if this inequality holds, then this intersection has measure zero. Hence with probability 1, $\psi_{k+1} \notin (\spann(A)\cap W_k)$ but $\psi_{k+1} \in W_k$.  We will prove (\ref{show}) by induction.

The first vector $\psi_1$ is chosen randomly from $W_0=\RR^M$.  If $A$ is any $M-1$ vectors in $\{\varphi_n\}_{n=1}^N$, we have
\[
	\dim(\spann(A)\cap W_0)=M-1
\]
so that $\{\varphi_n\}_{n=1}^N\cup \psi_1$ remains full spark with probability 1.  Now assume $\{\varphi_n\}_{n=1}^N \cup \{\psi_m\}_{m=1}^{k}$ is full spark.  Choose any $M-1$ vectors $A\subset \{\varphi_n\}_{n=1}^N\cup \{\psi_m\}_{m=1}^k$.  We consider two cases.

\noindent Case 1: Suppose $\psi_k\notin A$.  We may write
\[
	\spann(A)\cap W_k= \left(\spann(A) \cap W_{k-1}\right) \cap W_k.
\]
By our induction hypothesis, both subspaces on the right hand side have dimension less than or equal to $M-k$, and thus (\ref{show}) holds if we show the subspaces are not equal.  Note $\psi_k\notin A$ is needed to apply the induction hypothesis here.  Also note that $W_k \subseteq W_{k-1}$. For contradiction, suppose $\spann(A) \cap W_{k-1} = W_k$.  Switching to their orthogonal complements, since $\psi_k\in W_k^\perp$, we have $\psi_k\in  [\spann(A)\cap W_{k-1}]^\perp$ which is a space of dimension $k$.  Observing that $\psi_k\notin W_{k-1}^\perp$ which has dimension $k-1$ and
\[
	W_{k-1}^\perp \subset [\spann(A)\cap W_{k-1}]^\perp,
\]
it follows that $\psi_k$ lies in a unique one-dimensional space determined by $W_{k-1}$ and $A$.  Since $\psi_k$ was chosen randomly from an $M-k$ dimensional space, this fails with probability 1, and we have proven (\ref{show}).

\noindent Case 2: Suppose $\psi_k\in A$.  Since $\dim(W_k)=M-k$, note $\dim(\spann(A)\cap W_k)\leq M-k$.  For contradiction, suppose 
\begin{equation}\label{us1}
	\dim(\spann(A)\cap W_k)=M-k.
\end{equation}  
Then
\begin{equation}\label{sp}
	W_k\subset \spann(A).
\end{equation}
Choose some $\varphi\in \{\varphi_n\}_{n=1}^N$ where $\varphi\notin A$.  Then
\begin{equation}\label{us2}
	\dim(\spann(A\setminus \psi_k)\cap W_k) \leq \dim(\spann(A\setminus \psi_k \cup \varphi) \cap W_k )\leq (M-k)-1.
\end{equation}
where the last inequality follows by applying case 1.  Equations (\ref{us1}) and (\ref{us2}) imply
\[
	\dim(\spann(A\setminus \psi_k)\cap W_k)=(M-k)-1.
\]
However, since $\psi_k\perp W_k$ and $\psi_k\in A$, (\ref{us1}) and (\ref{sp}) imply 
\[
	\dim(\spann(A\setminus \psi_k)\cap W_k)=\dim(\spann(A)\cap W_k)=M-k,
\]
a contradiction.  We conclude (\ref{show}) must hold.
\end{proof}

By successive applications of Lemma \ref{lem1}, we immediately obtain the following result.

\begin{corollary}\label{orthspark}
Any finite number of randomly constructed orthonormal bases as in Lemma \ref{lem1} are full spark with probability 1.
\end{corollary}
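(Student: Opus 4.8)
The plan is to induct on the number of bases and invoke Lemma \ref{lem1} exactly once per basis adjoined. Label the bases $B_1,\dots,B_k$, each an orthonormal basis of the $M$-dimensional space produced by the randomized procedure of Lemma \ref{lem1}, and set $\Phi_j:=B_1\cup\cdots\cup B_j$. The goal is to show that $\Phi_k$ is full spark with probability $1$.

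For the base case I would observe that a single orthonormal basis $B_1$ is \emph{automatically} full spark, independent of any randomness: it consists of exactly $M$ vectors, so its only subset of size $M$ is $B_1$ itself, which is linearly independent as a basis, and every smaller subset sits inside a linearly independent set. Hence $\Phi_1$ is full spark with probability $1$.

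For the inductive step, assume $\Phi_j$ is full spark with probability $1$. The basis $B_{j+1}$ is built by precisely the random recipe of Lemma \ref{lem1}: its first vector is drawn at random from the whole space, and each later vector is drawn at random from the orthogonal complement of the span of its predecessors. Conditioning on the probability-$1$ event that $\Phi_j$ is full spark, I would apply Lemma \ref{lem1} with $\{\varphi_n\}=\Phi_j$; this family has $jM\ge M$ vectors, so the lemma's hypotheses are satisfied, and it yields that $\Phi_{j+1}=\Phi_j\cup B_{j+1}$ is full spark with probability $1$. Therefore $\Phi_{j+1}$ is full spark with probability $1$ as well.

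Since $k$ is finite, iterating this step involves only finitely many almost-sure events, and a finite intersection of almost-sure events remains almost sure, so $\Phi_k$ is full spark with probability $1$. The only point needing care, and the place I would flag as the main (though minor) obstacle, is that the family handed to Lemma \ref{lem1} at each stage is itself random rather than a fixed full spark family; I resolve this by conditioning on the probability-$1$ event supplied by the previous stage, which is exactly what keeps the finite chain of applications valid. No deeper difficulty arises, which is why the corollary follows immediately from the lemma.
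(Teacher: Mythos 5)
Your proof is correct and takes essentially the same route as the paper, which disposes of the corollary in one line by ``successive applications of Lemma \ref{lem1}.'' The details you supply---the base case that a single orthonormal basis is trivially full spark, and the conditioning on the almost-sure event from the previous stage so that Lemma \ref{lem1} applies to a random family---are exactly what the paper's immediate iteration leaves implicit.
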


\begin{lemma}\label{zerosandones}
Let $M \geq 2$ be a natural number. Choose any natural numbers $M-1\geq I_1\geq \cdots \geq I_M \geq 1$.  There exists a real invertible $M\times M$ matrix with entries zero and one which has precisly $I_k$ ones in the $k$-th row.
\end{lemma}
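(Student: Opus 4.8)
The plan is to forget about writing down any explicit formula and instead build the matrix one row at a time, maintaining linear independence and exploiting the large amount of freedom we have in placing the ones. First I would observe that the monotonicity hypothesis $M-1\geq I_1\geq\cdots\geq I_M\geq 1$ is merely a convenient normalization: permuting the rows of a matrix does not change whether it is invertible, so it suffices to produce, for \emph{any} prescribed row sums $I_1,\dots,I_M$ with $1\leq I_k\leq M-1$, an invertible $0$--$1$ matrix whose $k$-th row contains exactly $I_k$ ones.

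The engine of the proof is the following elementary spanning fact, which I would isolate as the key step: for every $r$ with $1\leq r\leq M-1$, the collection $S_r$ of all $0$--$1$ vectors in $\RR^M$ having exactly $r$ ones spans $\RR^M$. To see this, fix any $i\neq j$; since $1\leq r\leq M-1$ I can choose a set $T$ of size $r$ with $i\in T$ and $j\notin T$, and then $\mathbf 1_T-\mathbf 1_{T'}=e_i-e_j$, where $T'=(T\setminus\{i\})\cup\{j\}$. Thus $\spann(S_r)$ contains every difference $e_i-e_j$, hence the whole hyperplane $\{v:\sum_k v_k=0\}$; since any single $\mathbf 1_T\in S_r$ has coordinate sum $r\geq 1\neq 0$ and so lies off this hyperplane, $\spann(S_r)=\RR^M$.

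With this in hand I would construct the rows greedily. Suppose rows $r_1,\dots,r_{k-1}$ have already been chosen, are linearly independent, and have the prescribed numbers of ones; let $V=\spann\{r_1,\dots,r_{k-1}\}$, a subspace of dimension $k-1\leq M-1$, hence proper. Because $S_{I_k}$ spans $\RR^M$ it cannot be contained in the proper subspace $V$, so there is a $0$--$1$ vector with exactly $I_k$ ones lying outside $V$; take it to be $r_k$. Then $r_1,\dots,r_k$ remain independent, and after $M$ steps the matrix with these rows is an invertible $0$--$1$ matrix with exactly $I_k$ ones in row $k$, as required.

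The only real content is the spanning fact, and it is precisely here that the hypothesis $1\leq I_k\leq M-1$ is used: if some $I_k$ were $0$ or $M$ the corresponding set $S_{I_k}$ would be $\{0\}$ or $\{\mathbf 1\}$ and could not help us escape a proper subspace (indeed a zero row, or two all-ones rows, would force singularity). I expect the main obstacle to be psychological rather than technical --- resisting the temptation to write down an explicit staircase matrix, which is awkward because of the monotonicity ties (for instance $I=(2,2,2,2)$ in $\RR^4$ defeats the naive left-justified or cyclic patterns) --- and instead trusting the counting/greedy argument, whose verification is entirely routine once the spanning claim is established.
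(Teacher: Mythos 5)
Your proof is correct, and it takes a genuinely different route from the paper's. The paper proves the lemma by induction on the dimension: it borders an invertible $M\times M$ matrix (obtained from the induction hypothesis applied to the decremented sequence $I_1-1,\dots,I_s-1,I_{s+1},\dots,I_M$) with a new row and column, row reduces, and then runs a delicate contradiction argument showing that either the bordered matrix $\tilde{B}$ or one of its variants $\tilde{B}_\ell$ (obtained by relocating a single one in the last row) must be invertible. Your argument replaces all of this with one clean structural fact --- that for $1\leq r\leq M-1$ the set $S_r$ of $0$--$1$ vectors with exactly $r$ ones spans $\RR^M$, proved via the differences $\mathbf{1}_T-\mathbf{1}_{T'}=e_i-e_j$ together with one vector off the sum-zero hyperplane --- and a greedy selection: since $\spann\{r_1,\dots,r_{k-1}\}$ is a proper subspace, $S_{I_k}$ cannot lie inside it, so a suitable $k$-th row always exists. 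Each step checks out: the choice of $T$ with $i\in T$, $j\notin T$ needs exactly $r\leq M-1$, and the nonempty-$S_{I_k}$ condition needs $I_k\geq 1$, so your proof isolates precisely where the hypothesis $1\leq I_k\leq M-1$ enters (the paper's proof obscures this inside the row-reduction computation). Your observation that the monotone ordering is a harmless normalization, since row permutations preserve invertibility, is also correct, so your argument handles arbitrary prescribed row sums in $\{1,\dots,M-1\}$. What the trade-off buys: the paper's bordering construction is semi-explicit about how the matrix is assembled (though it too ends non-constructively with ``either $\tilde{B}$ or some $\tilde{B}_\ell$ is invertible''), whereas your argument is shorter, avoids the case analysis entirely, and makes the existence mechanism transparent; its only non-constructive ingredient is the selection of a vector of $S_{I_k}$ outside a proper subspace, which is algorithmically easy to realize anyway by testing finitely many candidates.
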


\begin{proof}
We proceed by induction on the dimension.  For $M=2$ the result is clear.  Assume the result holds for $M$, and consider $M+1$ so that for some natural number $s \leq M+1$ we have
\[
	M = I_1 =\cdots = I_s > I_{s+1} \geq \cdots \geq I_{M+1} \geq 1.
\]
Applying the induction hypothesis to $I_1-1 = \cdots = I_s-1 \geq I_{s+1} \geq \cdots \geq I_M$, we let $A=[a_{ij}]_{i,j=1}^M$ be an $M\times M$ invertible matrix with $I_k-1=M-1$ ones in row $k$ for $k=1,\ldots , s$ and $I_k$ ones in row $k$ for $k=s+1,\ldots , M$.  We now create an $(M+1)\times (M+1)$ matrix $B=[b_{ij}]_{i,j=1}^{M+1}$ by setting
\[
	b_{ij}=\begin{cases}
										a_{ij}	& 1\leq i,j \leq M \\
										1				&	1\leq i \leq s,\ j=M+1 \\
										1				& i=M+1,\ 1\leq j \leq I_{M+1} \\
										0				& \mbox{else}.
										\end{cases}
\]
Note $B$ has $I_k$ ones in each row for $k=1,\ldots,M+1$.  Since $A=[a_{ij}]_{i,j=1}^M=[b_{ij}]_{i,j=1}^M$ is invertible, we may row reduce $B$ to $\tilde{B}=[\tilde{b}_{ij}]_{i,j=1}^{M+1}$ where $[\tilde{b}_{ij}]_{i,j=1}^M=I_{M\times M}$ and row $M+1$ is left unchanged.  Suppose $\tilde{B}$ were not invertible.  Then row $M+1$ can be row reduced to all zeros, and examining the last entry in this row we must have
\begin{equation}\label{contra}
	\sum_{i=1}^{I_{M+1}} \tilde{b}_{i,M+1}=0.
\end{equation}
Now consider $\tilde{B}_{\ell}$ to be the matrix identical to $\tilde{B}$ but switch $\tilde{b}_{M+1,M+1}=0$ with $\tilde{b}_{M+1,\ell}=1$ where $\ell \in \{(1,\ldots,I_{M+1}\}$.  If $\tilde{B}_{\ell}$ is also non-invertible, we again may row reduce the last row to all zeros, and similar to (\ref{contra}), we now have
\begin{equation}\label{contra2}
	\sum_{\substack{i=1 \\ i\neq \ell}}^{I_{M+1}} \tilde{b}_{i,M+1}=-1.
\end{equation}
Notice (\ref{contra}) and (\ref{contra2}) imply $\tilde{b}_{M+1,\ell}=1$.  However, since this holds for all $\ell \in \{1,\ldots,I_{M+1}\}$ this contradicts (\ref{contra}).  It follows that either $\tilde{B}$ or $\tilde{B}_{\ell}$, for some $\ell \in \{1,\ldots,I_{M+1}\}$, must be invertible proving the result.
\end{proof}

We may now combine the previous lemmas to create a special case of the phase retrieval problem which reduces to the case of classical phase retrieval.

\begin{theorem}\label{phaseless}
Phase retrieval in $\RR^M$ is possible using $2M-1$ subspaces each of any dimension less than $M-1$.
\end{theorem}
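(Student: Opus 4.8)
The plan is to reduce phase retrieval with subspace measurements to the classical (rank-one) case settled in Theorem \ref{original}. The key observation is that if a subspace $W$ is spanned by an orthonormal set $\{\psi_j\}_{j\in S}$, then $\norm{P_W x}^2=\sum_{j\in S}\abs{\inner{x}{\psi_j}}^2$; that is, a single subspace measurement is exactly a \emph{sum} of classical intensity measurements. Consequently, if I build each subspace as the span of a subset of one fixed orthonormal basis, the vector of subspace measurements is a fixed linear image of the vector of classical intensity measurements against that basis. The whole construction then reduces to choosing these subsets so that (i) the linear map is invertible, letting me recover the individual intensity measurements, and (ii) the underlying collection of basis vectors allows classical phase retrieval.

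First I would fix two orthonormal bases $\{e_j\}_{j=1}^M$ and $\{f_j\}_{j=1}^M$ of $\RR^M$ constructed as in Lemma \ref{lem1}, so that by Corollary \ref{orthspark} the $2M$ vectors $\{e_j\}\cup\{f_j\}$ are full spark. Next, given the prescribed dimensions, I would invoke Lemma \ref{zerosandones} to produce an invertible $M\times M$ zero-one matrix $A=[a_{kj}]$ whose $k$-th row has a prescribed number $I_k$ of ones, and set $V_k=\spann\{e_j:a_{kj}=1\}$ for $k=1,\ldots,M$. Since $\{e_j\}$ is orthonormal, $\norm{P_{V_k}x}^2=\sum_j a_{kj}\abs{\inner{x}{e_j}}^2$, so invertibility of $A$ recovers every $\abs{\inner{x}{e_j}}^2$ from the $M$ measurements $\norm{P_{V_k}x}^2$, using subspaces of dimensions $I_1,\ldots,I_M$. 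I would do the same with an invertible $(M-1)\times(M-1)$ zero-one matrix built inside $\spann\{f_1,\ldots,f_{M-1}\}$, producing $M-1$ further subspaces and recovering $\abs{\inner{x}{f_j}}^2$ for $j=1,\ldots,M-1$. This is $M+(M-1)=2M-1$ subspaces in all.

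It then remains to read off phase retrieval. Inverting the two linear systems yields the classical intensity measurements of $x$ against the $2M-1$ vectors $\{e_1,\ldots,e_M,f_1,\ldots,f_{M-1}\}$, which form a subset of a full spark family and hence are themselves full spark; having at least $2M-1$ vectors they satisfy the complement property, so Theorem \ref{original} recovers $x$ up to a global sign and $\cA$ is injective. To realize arbitrary prescribed dimensions I would sort them and feed them as the row sums $I_k$, assigning $M$ of them to the first block and the remaining $M-1$ to the second: the $M\times M$ matrix admits row sums up to $M-1$ and the $(M-1)\times(M-1)$ matrix admits row sums up to $M-2$, so the hypothesis that every dimension is less than $M-1$ is exactly what guarantees each prescribed dimension is an admissible row sum for either block.

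The main obstacle is conceptual rather than computational: because a subspace only reports the \emph{sum} of its coordinate intensities, one might expect to lose information and require more than $2M-1$ subspaces. The crux is the asymmetric split $2M-1=M+(M-1)$, in which the second basis contributes only $M-1$ of its vectors (the vector $f_M$ is simply discarded), together with Lemma \ref{zerosandones}'s guarantee that the disentangling matrices can be made invertible while meeting the prescribed row sums. I would also need to dispose of degenerate small cases, since the dimension bound forces $M\ge 3$ for a nonvacuous statement, and to double-check the row-sum range and ordering constraints of Lemma \ref{zerosandones} when distributing the prescribed dimensions between the two blocks.
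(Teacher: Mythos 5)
Your proposal is correct and follows essentially the same route as the paper's first proof of Theorem \ref{phaseless}: two orthonormal sets whose union is full spark (Corollary \ref{orthspark}), subspaces spanned by subsets of these vectors, invertible zero-one matrices from Lemma \ref{zerosandones} to disentangle the summed intensities, and Theorem \ref{original} to finish via the complement property. Your added checks (sorting the prescribed dimensions for Lemma \ref{zerosandones}, noting a subset of a full spark family is full spark, and the asymmetric $M+(M-1)$ split discarding $f_M$) are all consistent with the paper's argument.
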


\begin{proof}
Let $\{\varphi_n\}_{n=1}^{2M-1}$ be a family of vectors in $\RR^M$ with the complement property and the additional requirement that $\{\varphi_n\}_{n=1}^M$ and $\{\varphi_n\}_{n=M+1}^{2M-1}$ are orthonormal sets.  Such a set exists by Corollary \ref{orthspark}.  Let $I_k\subseteq \{1,\ldots,M\}$ for $k=1,\ldots, M$, let $J_k\subseteq \{M+1,\ldots,2M-1\}$ for $k=M+1,\ldots, 2M-1$, and let $P_{I_k}$ and $P_{J_k}$ denote the orthogonal projection onto $\spann(\{\varphi_n\}_{n\in I_k})$ and $\spann(\{\varphi_n\}_{n\in J_k})$ respectively.  We consider the problem of phase retrieval from $\norm{P_{I_k}x}$ and $\norm{P_{J_k}x}$ for $x \in \RR^M$ and for $k=1,\ldots,2M-1$.  

Let $A=[a_{kz}]_{k,z=1}^M$ be the $M\times M$ matrix whose rows correlate to $I_k$ where $a_{kz}=1$ if $z\in I_k$ and zero otherwise.  Define $B=[b_{kz}]_{k,z=1}^{M-1}$ similarly as the $(M-1) \times (M-1)$ matrix where $b_{kz}=1$ if $(z+M)\in J_k$ and zero otherwise.  We first examine the subspaces $\spann(\{\varphi_n\}_{n\in I_k})$ for $k=1,\ldots, M$. Notice for any $x \in \RR^M$,
\[
	\norm{P_{I_k}x}^2=\sum_{n\in I_k}\abs{\inner{x}{\varphi_n}}^2
\]
so that we have the equation
\begin{equation}\label{eq1}
		\begin{bmatrix}
			\norm{P_{I_1}x}^2 \\ \vdots \\ \norm{P_{I_M}x}^2
		\end{bmatrix}
		=
		A \begin{bmatrix}
				\abs{\inner{x}{\varphi_1}}^2 \\ \vdots \\ \abs{\inner{x}{\varphi_M}}^2
			\end{bmatrix}.
\end{equation}
Provided $A$ is invertible, we can solve for $\{\abs{\inner{x}{\varphi_n}}\}_{n=1}^M$.  We obtain a similar equation using $B$. So provided that $A$ and $B$ are both invertible, we can completely determine $\{\abs{\inner{x}{\varphi_n}}\}_{n=1}^{2M-1}$.  Now we have reduced the problem to the one-dimensional case, thus since $\{\varphi_n\}_{n=1}^{2M-1}$ were assumed to have the complement property then by Theorem \ref{original} it follows that phase retrieval is possible using the subspaces $\spann(\{\varphi_n\}_{n\in I_k})$ and $\spann(\{\varphi_n\}_{n\in J_k})$ for $k=1,\ldots, 2M-1$.  

All that remains is to pick $\{I_k\}_{k=1}^M$ and $\{J_k\}_{k=M+1}^{2M-1}$ so that $A$ and $B$ are invertible.  We may choose any invertible matrix with $I_k$ ones ($J_k$ ones respectively) in each row.  Note the number of ones in each row corresponds to the dimension of a subspace.  Such invertible matrices exist by Lemma \ref{zerosandones} for any $1\leq I_k \leq M-1$ and $1\leq J_k \leq M-2$.
\end{proof}

Questions concerning phase retrieval are fundamentally questions about the behavior of certain operators
in the $M(M+1)/2$-dimensional space of symmetric operators on $\HH_M$.  Although the above argument is
quite clean, it disguises what is really going on in the space of symmetric operators.  So we offer an alternative
proof of Theorem \ref{phaseless}  which makes the connections to the space of operators transparent. 
\vskip12pt

\noindent {\bf Proof 2 of Theorem \ref{phaseless}:}
\begin{proof}
Let $\{\varphi_n\}_{n=1}^{2M-1}$ be a family of vectors in $\RR^M$ which are full spark and have the additional requirement that $\{\varphi_n\}_{n=1}^M$ and $\{\varphi_n\}_{n=M+1}^{2M-1}$ are orthonormal sets.  Such a set exists by Corollary \ref{orthspark}.  Let $\HH^{M\times M}$ be the $M(M+1)/2$ dimensional vector space of $M\times M$ self-adjoint real matrices.
Consider the map $\RR^M$ into $\HH^{M \times M}$, given by $x \mapsto xx^*$.  Notice
\begin{equation}\label{prf2eqn}
	 \inner{xx^*}{\varphi_n \varphi_n^*}= Tr(xx^*\varphi_n\varphi_n^*)=\varphi_n^*xx^*\varphi_n=\abs{\inner{x}{\varphi_n}}^2.
\end{equation}
Also note that $\{\varphi_n\varphi_n^*\}_{n=1}^{2M-1}$ is linearly independent since $\Phi$ is full spark.  Further $\{\varphi_n\varphi_n^*\}_{n=1}^M$ and $\{\varphi_n\varphi_n^*\}_{n=M+1}^{2M-1}$ are orthonormal sets.  Thus for any $I\subset \{1,\ldots, M\}$ or $I\subset \{M+1,\ldots,2M-1\}$, letting $P_I$ be the orthogonal projection onto $span(\{\varphi_n\}_{n\in I})$, we see for any $x\in \RR^M$,
\[
	\norm{P_Ix}^2 = \norm{\sum_{n\in I}(\varphi_n\varphi_n^*)x}^2=\sum_{n\in I}\abs{\inner{x}{\varphi_n}}^2=\inner{xx^*}{\sum_{n\in I}\varphi_n\varphi_n^*}.
\]

Considering for a moment one dimensional projections, if there exist $x \neq \pm y$ such that $\abs{\inner{x}{\varphi_n}}=\abs{\inner{y}{\varphi_n}}$ for all $n\in \{1,\ldots,2M-1\}$, then equivalently by (\ref{prf2eqn})
\[
	\inner{xx^*-yy^*}{\varphi_n\varphi_n^*}=0
\]
for all $n$, or rather 
\[
	xx^*-yy^*\in (span\left\{\varphi_n\varphi_n^*\}_{n=1}^{2M-1}\right)^\perp.
\]
Thus if we can find subsets $I_k \subseteq \{1,\ldots,M\}$ for $k=1,\ldots, M$ and $J_k \subseteq \{M+1,\ldots,2M-1\}$ for $k=M+1,\ldots,2M-1$ such that
\[
	span\left(\left\{\sum_{n\in I_k}\varphi_n\varphi_n^*\right\}_{k=1}^M \bigcup \left\{\sum_{n\in J_k}\varphi_n\varphi_n^*\right\}_{k=M+1}^{2M-1} \right)=span\left(\{\varphi_n\varphi_n^*\}_{n=1}^{2M-1}\right),
\]
then Theorem \ref{original}
 guarantees no such $x$ and $y$ exist, and reconstruction without phase is possible using the subspaces $span(\{\varphi_n\}_{n\in I_k})$ and $span(\{\varphi_n\}_{n\in J_k})$ for $k=1,\ldots, 2M-1$.

We are left to find the sets $I_k$ and $J_k$.  We will choose to first examine the $I_k$.  Note we need only find $I_k$ such that $\{\sum_{n\in I_k}\varphi_n\varphi_n^*\}_{k=1}^{M}$ is linearly independent; the spanning properties then follow immediately since we would have linearly independent sets of the same size.  Suppose there exist scalars $c_k$ such that
\[
	\sum_{k=1}^{M} c_k \left(\sum_{n\in I_k}\varphi_n\varphi_n^*\right)=0.
\]
In terms of the basis $\{\varphi_n\varphi_n^*\}_{n=1}^M$ this is the equation
\[
	A_I \begin{bmatrix} c_1 &c_2 &\cdots &c_M \end{bmatrix}^t =0,
\]
where $A_I=[a_{kz}]_{k,z=1}^M$ is an $M\times M$ matrix with rows corresponding to $I_k$ such that $a_{kz}=1$ for all $z\in I_k$ and $0$ otherwise.  Thus $\{\sum_{n\in I_k}\varphi_n\varphi_n^*\}_{k=1}^{M}$ is linearly independent if $A$ is invertible. All that remains is to pick $\{I_k\}_{k=1}^M$ so that $A$ is invertible. We may choose any invertible $M \times M$ matrix with $I_k$ ones in each row.  Note the number of ones in each row corresponds to the dimension of a subspace.  Such invertible matrices exist by Lemma \ref{zerosandones} for any $1\leq I_k \leq M-1$.

A similar argument follows for $J_k$, except $A_J$ will be an $M-1\times M-1$ invertible matrix with $1 \leq J_k \leq M-2$ for all $k$.
\end{proof}

Notice we restrict the dimensions of the subspaces in this theorem to be less than $M-1$ since the matrix $B$ is $(M-1)\times (M-1)$ and we need $B$ to be invertible.  However, we can obtain phase retrieval in $\RR^M$ using subspaces of dimension less than $M$. To see this, suppose we are given a subspace $W_n$ such that dim$(W_n)=M-1$ and $W_n^\perp=\spann\{\varphi_n\}$. By considering the projection onto $W_n^\perp$, the intensity measurement $\abs{\inner{x}{\varphi_n}}$ should contain similar information to the measurement $\norm{P_nx}$ where here $P_n$ is the projection onto $W_n$.  Indeed since
\begin{equation} \label{complements}
	\norm{P_nx}^2=\norm{x}^2-\abs{\inner{x}{\varphi_n}}^2,
\end{equation} 
this suggests that the limits on dimension to less than $M-1$ in Theorem \ref{phaseless} could be relaxed.  Using notation from the proof, the matrix $A$ lets us solve for $\abs{\inner{x}{\varphi_n}}$ for $n=1,\ldots,M$ giving us $\norm{x}^2=\sum_{n=1}^M\abs{\inner{x}{\varphi_n}}^2$.  Now for the remaining subspaces corresponding to matrix $B$, we may indeed allow $M-1$ dimensional subspaces by considering instead orthogonal complements and using (\ref{complements}).  This leads to the following corollary.

\begin{corollary}\label{cor1}
Phase retrieval in $\RR^M$ is possible using $2M-1$ subspaces each of any dimension less than $M$.
\end{corollary}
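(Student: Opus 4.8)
The plan is to build on the construction in Theorem \ref{phaseless}, changing only the second family of subspaces so that dimension $M-1$ becomes available there as well. As before I would start from a full spark family $\{\varphi_n\}_{n=1}^{2M-1}$ with the complement property, arranged so that $\{\varphi_n\}_{n=1}^M$ and $\{\varphi_n\}_{n=M+1}^{2M-1}$ are orthonormal sets; such a family exists by Corollary \ref{orthspark}. The whole argument once again reduces to recovering the classical intensities $\{\abs{\inner{x}{\varphi_n}}^2\}_{n=1}^{2M-1}$ and then invoking Theorem \ref{original} to recover $x$ up to a global sign.

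For the first family indexed by $I_k\subseteq\{1,\ldots,M\}$ I would use the construction of Theorem \ref{phaseless} verbatim: since $A$ is an $M\times M$ matrix, Lemma \ref{zerosandones} already permits row sums, hence subspace dimensions, as large as $M-1$. Solving the invertible system (\ref{eq1}) recovers $\abs{\inner{x}{\varphi_n}}^2$ for $n=1,\ldots,M$, and because $\{\varphi_n\}_{n=1}^M$ is an orthonormal basis this yields $\norm{x}^2=\sum_{n=1}^M\abs{\inner{x}{\varphi_n}}^2$ by Parseval. This extra datum $\norm{x}^2$ is exactly what lets me push the second family up to dimension $M-1$.

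The key idea for the second family is the complement identity (\ref{complements}): if $W$ is the orthogonal complement of $\spann\{\varphi_n:n\in J\}$ for an orthonormal subset, then $\norm{P_Wx}^2=\norm{x}^2-\sum_{n\in J}\abs{\inner{x}{\varphi_n}}^2$ and $\dim W=M-\abs{J}$. Since $\norm{x}^2$ is now known, measuring against $W$ carries precisely the same linear information about the unknowns $\abs{\inner{x}{\varphi_n}}^2$ as the direct measurement against $\spann\{\varphi_n:n\in J\}$, but with its dimension reflected to $M-\abs{J}$. Thus to realize a prescribed dimension $d_k\in\{1,\ldots,M-1\}$ for the $k$-th second-family subspace, I would use the direct spanning subspace of $\abs{J_k}=d_k$ vectors when $d_k\leq M-2$, and the complement of a single vector ($\abs{J_k}=1$) when $d_k=M-1$. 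In either case, after subtracting the known $\norm{x}^2$ where a complement was used, the measurement equals $\sum_{n\in J_k}\abs{\inner{x}{\varphi_n}}^2$, so the $\{\abs{\inner{x}{\varphi_n}}^2\}_{n=M+1}^{2M-1}$ satisfy a linear system with an $(M-1)\times(M-1)$ zero-one coefficient matrix $B$ whose $k$-th row sum is $\abs{J_k}$.

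The point needing care, and the one I would treat as the main obstacle, is the bookkeeping that keeps every row sum of $B$ at most $M-2$ \emph{even when} some subspaces have dimension $M-1$; this is exactly what the reflection $\abs{J_k}=M-d_k$ achieves, since $d_k\leq M-2$ gives $\abs{J_k}=d_k\leq M-2$ and $d_k=M-1$ gives $\abs{J_k}=1$. With all row sums in $\{1,\ldots,M-2\}$, sorting them into decreasing order and applying Lemma \ref{zerosandones} in dimension $M-1$ produces an invertible $B$, so the remaining intensities are recovered. Having all of $\{\abs{\inner{x}{\varphi_n}}^2\}_{n=1}^{2M-1}$ together with the complement property of $\{\varphi_n\}_{n=1}^{2M-1}$, Theorem \ref{original} then recovers $x$ up to a global sign, using $2M-1$ subspaces each of dimension at most $M-1$.
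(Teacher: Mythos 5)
Your proposal is correct and takes essentially the same route as the paper: recover $\{\abs{\inner{x}{\varphi_n}}^2\}_{n=1}^{M}$ from the invertible matrix $A$, deduce $\norm{x}^2$, and then realize the dimension-$(M-1)$ subspaces in the second family as orthogonal complements of single vectors via the identity (\ref{complements}), keeping every row sum of $B$ at most $M-2$ so Lemma \ref{zerosandones} still applies. The only difference is that you spell out the bookkeeping (which $d_k$ use the complement trick and why the row sums of $B$ stay in $\{1,\ldots,M-2\}$) more explicitly than the paper does.
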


We mention similar arguments hold for the complex case.  The authors of \cite{BBCE} show that $4M-2$ generic vectors allow phase retrieval in $\CC^M$ (we should point out $4M-2$ vectors are not necessary; \cite{BH} recently has given an example of $4M-4$ vectors which allow phase retrieval).   As Corollary \ref{orthspark} holds for complex vector spaces, we may obtain $4M-2$ full spark vectors, say $\{\varphi_n\}_{n=1}^{4M-2}$, which are the union of four orthogonal sets.  We then may create four matrices of zeros and ones as in the real case and reduce the problem of phase retrieval to the classical case with measurement vectors $\{\varphi_n\}_{n=1}^{4M-2}$.  Unfortunately phase retrieval with vector measurements in $\CC^M$ is fundamentally different from $\RR^M$, and there is no known necessary and sufficient condition for phase retrieval similar to the complement property.  The orthogonality requirements here destroys the genericity of our $4M-2$ vectors and with it the guarentee that $\{\varphi_n\}_{n=1}^{4M-2}$ allows phase retrieval.  

A recent result in \cite{MV} however, shows phase retrieval in $\CC^M$ is possible with the rows of four generic $M \times M$ unitary matrices.  Notice for any $x\in \CC^M$, by measuring with the $M$ rows of the first unitary matrix, we may determine $\norm{x}$.  At this point, measuring with any $M-1$ rows of another unitary determines that final measurement.  Therefore, this result actually implies phase retrieval is possible in $\CC^M$ with $4M-3$ vectors taken from $4$ orthonormal sets.  Taking these $4M-3$ vectors, the arguments above are now valid, and we have the following corollary for the complex case.

\begin{corollary}\label{com}
Phase retrieval in $\CC^M$ is possible using $4M-3$ subspaces each of any dimension less than $M$.
\end{corollary}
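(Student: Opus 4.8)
The plan is to adapt the proof of Theorem \ref{phaseless} (and its refinement Corollary \ref{cor1}) to the complex setting, replacing the complement-property input by the classical complex phase retrieval guarantee recorded in the discussion above. First I would invoke the result of \cite{MV} together with the observation preceding this corollary to obtain $4M-3$ vectors $\{\varphi_n\}_{n=1}^{4M-3}$ in $\CC^M$ that allow classical phase retrieval and that decompose into four orthonormal sets: a first set $\{\varphi_n\}_{n=1}^M$ which is an orthonormal basis, together with three further sets each of size $M-1$. The crucial structural feature is that within any single orthonormal set the rank-one operators $\varphi_n\varphi_n^*$ are automatically orthonormal in the Hilbert--Schmidt inner product, since $\inner{\varphi_n\varphi_n^*}{\varphi_m\varphi_m^*}=\abs{\inner{\varphi_n}{\varphi_m}}^2=\delta_{nm}$, and hence linearly independent. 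This is exactly the ingredient that full spark supplied in the real proof, and here it comes for free from orthonormality.

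Next I would carry out the reduction to the classical case, exactly as in equation (\ref{eq1}). For each of the four orthonormal sets I select index subsets $I_k$ and let $P_{I_k}$ be the orthogonal projection onto $\spann(\{\varphi_n\}_{n\in I_k})$, so that $\norm{P_{I_k}x}^2=\sum_{n\in I_k}\abs{\inner{x}{\varphi_n}}^2$. This expresses the vector of squared projection norms as a zero-one matrix applied to the vector of squared intensities $\abs{\inner{x}{\varphi_n}}^2$. Using Lemma \ref{zerosandones} I would choose these matrices to be invertible with prescribed row sums: one $M\times M$ matrix with row sums in $\{1,\ldots,M-1\}$ for the first set, and three $(M-1)\times(M-1)$ matrices with row sums in $\{1,\ldots,M-2\}$ for the remaining sets. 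Inverting them recovers every $\abs{\inner{x}{\varphi_n}}^2$, hence every $\abs{\inner{x}{\varphi_n}}$, and since $\{\varphi_n\}_{n=1}^{4M-3}$ allows classical phase retrieval, $x$ is then determined up to a global phase.

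To reach subspaces of any dimension less than $M$, rather than the weaker bounds coming directly from the matrix sizes, I would repeat the complement trick used to pass from Theorem \ref{phaseless} to Corollary \ref{cor1}. Because the first orthonormal set is a basis, recovering $\{\abs{\inner{x}{\varphi_n}}^2\}_{n=1}^M$ yields $\norm{x}^2=\sum_{n=1}^M\abs{\inner{x}{\varphi_n}}^2$. For the three remaining sets I would then pass to orthogonal complements and use the identity (\ref{complements}), namely $\norm{P_nx}^2=\norm{x}^2-\abs{\inner{x}{\varphi_n}}^2$, which upgrades the admissible dimensions for those blocks from $M-2$ to $M-1$, giving every dimension strictly less than $M$.

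The matrix-construction part is routine given Lemma \ref{zerosandones}, so I expect the main obstacle to be the first step: securing $4M-3$ vectors that \emph{both} allow classical phase retrieval in $\CC^M$ \emph{and} split into orthonormal blocks. Unlike the real case, full spark does not imply complex phase retrieval and there is no complement-property analogue to fall back on, so the phase retrieval guarantee must be imported from the generic-unitaries result of \cite{MV}; the orthonormal block structure is precisely what lets me drop one row from three of the four unitaries while retaining injectivity, and it is also what provides, for free, the linear independence needed to invert the zero-one matrices. Reconciling the genericity required by \cite{MV} with the rigid orthonormality of the blocks is the only delicate point, and it is exactly what the discussion preceding the corollary resolves.
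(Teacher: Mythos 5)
Your proposal is correct and follows essentially the same route as the paper: import the four-generic-unitaries result of \cite{MV}, use the norm observation to drop one row from three of the four unitaries (yielding $4M-3$ vectors in four orthonormal blocks that allow classical phase retrieval), reduce to the classical case via invertible zero-one matrices from Lemma \ref{zerosandones}, and upgrade the $(M-1)\times(M-1)$ blocks to admit dimension $M-1$ via the complement identity (\ref{complements}). One side remark is slightly off but harmless: the invertibility of the zero-one matrices comes from Lemma \ref{zerosandones} alone and needs no linear independence of the operators $\varphi_n\varphi_n^*$; what orthonormality of each block actually buys in this reduction is the identity $\norm{P_{I_k}x}^2=\sum_{n\in I_k}\abs{\inner{x}{\varphi_n}}^2$.
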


What we have done in this section is bound the minimal number of subspaces required for phase retrieval in $\RR^M$ by $2M-1$ and in $\CC^M$ by $4M-3$.  In the case of classical phase retrieval in $\RR^M$, $2M-1$ are also necessary.  It's unclear whether or not this holds true here as well.  In the next section we will characterize the subspaces which allow phase retrieval, and this characterization will highlight some of the difficulties in determining this answer.

\section{Characterizing subspaces which allow phase retrieval} \label{sec2}

Much recent advancement for classical phase retrieval has come from lifting the problem into the space of self-adjoint operators.  We may take a similar approach when using norms of projections as our measurements.  Let $\HH^{M\times M}$ be the $M(M+1)/2$ dimensional vector space of $M\times M$ self-adjoint real matrices.  Given a family of subspaces $\{W_n\}_{n=1}^N$ of $\RR^M$ with corresponding projections $P_n\in \HH^{M\times M}$, define the operator $F:\HH^{M\times M}\rightarrow \RR^N$ as $FA(n)=\inner{A}{P_n}_{HS}$.  Here $\inner{\ }{\ }_{HS}$ is the Hilbert-Schmidt inner product.  If we let $\{\varphi_{n,d}\}_{d=1}^{D_n}$ be an orthonormal basis for $W_n$, notice for any $x\in \RR^M$,
\[	
	F(xx^*)(n)	=\inner{xx^*}{P_n}_{HS}=\Tr(xx^*P_n)=\Tr(xx^*\sum_{d=1}^{D_n}\varphi_{n,d}\varphi_{n,d}^*),
\]
and by the cyclic property of the trace
\[
		\Tr(xx^*\sum_{d=1}^{D_n}\varphi_{n,d}\varphi_{n,d}^*)=\sum_{d=1}^{D_n}\varphi_{n,d}^*xx^*\varphi_{n,d}=\sum_{d=1}^{D_n}\abs{\inner{x}{\varphi_{n,d}}}^2=\norm{P_nx}^2.
\]

Therefore $F(xx^*)(n)=\norm{P_nx}^2$, and much like the classical phase retrieval problem \cite{BCE, BCMN, CSV}, we may linearize the measurements by working in this higher dimensional space of self-adjoint operators.  This identification yields a useful characterization for when subspaces allow phase retrieval.  For classical phase retrieval, Lemma 9 in \cite{BCMN} provides this characterization.

\begin{lemma} (Lemma 9 in \cite{BCMN}) \label{lemma1}
Let $\Phi=\{\varphi_n\}_{n=1}^N$ be a family of vectors in $\RR^M$.  Then $\Phi$ allows phase retrieval if and only if the null space of $G:\HH^{M\times M}\rightarrow \RR^N$ given by $GA(n)=\inner{A}{\varphi_n\varphi_n^*}_{HS}$ does not contain a matrix of rank $1$ or $2$.
\end{lemma}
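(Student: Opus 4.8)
The plan is to lift the measurements into $\HH^{M\times M}$, exactly as in the discussion preceding the lemma, and reinterpret the failure of phase retrieval as a statement about low-rank matrices in $\Nul(G)$. The starting identity is $\abs{\inner{x}{\varphi_n}}^2=\inner{xx^*}{\varphi_n\varphi_n^*}_{HS}=G(xx^*)(n)$, from which $\abs{\inner{x}{\varphi_n}}=\abs{\inner{y}{\varphi_n}}$ for all $n$ is equivalent to $\inner{xx^*-yy^*}{\varphi_n\varphi_n^*}_{HS}=0$ for all $n$, i.e. to $xx^*-yy^*\in\Nul(G)$. Hence $\Phi$ fails to allow phase retrieval precisely when $\Nul(G)$ contains a matrix $xx^*-yy^*$ with $x\neq\pm y$.

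Next I would classify the matrices of this form. Writing $xx^*-yy^*=C\,\mathrm{diag}(1,-1)\,C^*$, where $C=[\,x\ \ y\,]$ is the $M\times 2$ matrix with columns $x$ and $y$, Sylvester's law of inertia shows that when $x,y$ are linearly independent the difference has rank $2$ and signature $(1,1)$, while when $x,y$ are linearly dependent (including the case that one is zero) it has rank $1$; the excluded case $x=\pm y$ gives the zero matrix. In particular every matrix $xx^*-yy^*$ with $x\neq\pm y$ has rank $1$ or $2$, so if $\Phi$ fails phase retrieval then $\Nul(G)$ contains a matrix of rank $1$ or $2$. This is the easy direction.

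For the converse I would prove the contrapositive: starting from a matrix $S\in\Nul(G)$ of rank $1$ or $2$, produce $x\neq\pm y$ with $xx^*-yy^*\in\Nul(G)$. If $S$ has rank $1$, then after rescaling $S=\pm vv^*$, which is $xx^*-yy^*$ with one of the two vectors equal to $0$. If $S$ has rank $2$ and is indefinite, its spectral decomposition $S=\lambda_1 u_1u_1^*-\lambda_2 u_2u_2^*$ with $\lambda_1,\lambda_2>0$ exhibits it directly as $xx^*-yy^*$ with $x,y$ linearly independent. In both cases phase retrieval fails.

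The remaining possibility, and the step I expect to be the main obstacle, is a rank-$2$ matrix $S\in\Nul(G)$ that is semidefinite, since such an $S$ is not itself of the form $xx^*-yy^*$. Here I would use positivity rather than a direct factorization: writing $S=\mu_1 u_1u_1^*+\mu_2 u_2u_2^*$ with $u_1,u_2$ orthonormal and $\mu_1,\mu_2$ of a common sign, the equation $\inner{S}{\varphi_n\varphi_n^*}_{HS}=\mu_1\abs{\inner{u_1}{\varphi_n}}^2+\mu_2\abs{\inner{u_2}{\varphi_n}}^2=0$ is a sum of two terms of the same sign, so each term vanishes for every $n$. Thus $u_1u_1^*\in\Nul(G)$ is a rank-$1$ element of the null space, and this case collapses to the rank-$1$ case already handled. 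Assembling the cases proves the equivalence.
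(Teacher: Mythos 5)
Your proof is correct and follows essentially the same route as the argument this paper relies on for Lemma~\ref{lemma1} (it cites Lemma 9 of \cite{BCMN} rather than reproving it, but re-uses exactly this technique in Lemma~\ref{orth}): lift to $\HH^{M\times M}$, identify failure of phase retrieval with a nonzero matrix $xx^*-yy^*\in \Nul(G)$, and in the converse handle the semidefinite rank-$2$ case by the positivity argument that forces each term $\mu_i\abs{\inner{u_i}{\varphi_n}}^2$ to vanish, reducing to the rank-$1$ case. Your explicit treatment of that semidefinite case is precisely the step most often glossed over, and your case analysis is complete.
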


If we generalize this result to projections of arbitrary ranks, it turns out that the characterization is identical.  In fact, the same proof technique holds by replacing the operator $G$ in Lemma \ref{lemma1} with its subspace component analog $F$.  For this reason, we give the following as a corollary and omit the proof.

\begin{corollary}\label{char}
Given subspaces $\{W_n\}_{n=1}^N$ in $\RR^M$ with corresponding projections $P_n$, $\{W_n\}_{n=1}^N$ allows phase retrieval if and only if there are no matrices of rank $1$ or $2$ in the null space of $F$.
\end{corollary}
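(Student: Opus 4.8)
The plan is to exploit the lifted identity $F(xx^*)(n)=\norm{P_nx}^2$ established just above, which realizes $\cA$ as the restriction of $F$ to the rank-one positive matrices $xx^*$. First I would record the basic equivalence: the family $\{W_n\}_{n=1}^N$ fails to allow phase retrieval exactly when there exist $x\ne\pm y$ in $\RR^M$ with $\norm{P_nx}^2=\norm{P_ny}^2$ for every $n$, and by linearity of $F$ this says precisely that the nonzero matrix $xx^*-yy^*$ lies in $\Nul(F)$. The whole argument then reduces to matching the matrices that arise as differences $xx^*-yy^*$ (with $x\ne\pm y$) against the rank condition, exactly as in the rank-one proof of Lemma \ref{lemma1}.

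For the forward implication I would simply compute the rank of $xx^*-yy^*$. If $x$ and $y$ are linearly independent the range is $\spann\{x,y\}$ and the matrix has rank $2$; if they are dependent but $x\ne\pm y$, the matrix collapses to a nonzero multiple of a single $zz^*$ and has rank $1$. Hence a failure of phase retrieval always deposits a matrix of rank $1$ or $2$ into $\Nul(F)$.

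The converse is where the work lies. Given a nonzero $Q\in\Nul(F)$ of rank $1$ or $2$, I would diagonalize $Q=\lambda_1 u_1u_1^*+\lambda_2 u_2u_2^*$ with $u_1,u_2$ orthonormal and $\lambda_i$ its nonzero eigenvalues. When $Q$ has rank $1$, writing $Q=\pm zz^*$ exhibits it as $xx^*-yy^*$ with one of $x,y$ equal to $0$; when $Q$ has rank $2$ with eigenvalues of opposite sign, rescaling $u_1,u_2$ writes $Q=xx^*-yy^*$ with $x\perp y$, so $x\ne\pm y$. In either case $F(xx^*-yy^*)=0$ forces $\cA(x)=\cA(y)$, and phase retrieval fails.

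The main obstacle is the remaining case: a rank $2$ element $Q\in\Nul(F)$ that, after possibly negating it (which keeps it in the subspace $\Nul(F)$), is positive semidefinite, since such a $Q$ is not itself a difference $xx^*-yy^*$. Here I would invoke the positive semidefiniteness of each projection $P_n$: from $0=\inner{Q}{P_n}_{HS}=\Tr(QP_n)=\sum_{d}\inner{\varphi_{n,d}}{Q\varphi_{n,d}}$ and the nonnegativity of each summand, every $\varphi_{n,d}$ lies in $\ker Q$, so each $W_n$ is orthogonal to the range of $Q$. Choosing any $0\ne x$ in that range then gives $P_nx=0$ for all $n$, whence $xx^*\in\Nul(F)$ is a rank $1$ element with $\cA(x)=0=\cA(0)$ and $x\ne\pm 0$. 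This reduces the definite case to the rank $1$ case and closes the equivalence; it is precisely this positivity argument, inherited from the rank-one setting of Lemma \ref{lemma1}, that rules out definite rank $2$ obstructions and explains why no signature hypothesis is needed in the statement.
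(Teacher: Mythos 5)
Your proof is correct and is essentially the argument the paper has in mind: Corollary \ref{char}'s proof is omitted there precisely because it is the proof of Lemma \ref{lemma1} (Lemma 9 of \cite{BCMN}) with $G$ replaced by $F$, which is exactly what you carry out --- lifting failures of injectivity to nonzero differences $xx^*-yy^*$ of rank $1$ or $2$ in $\Nul(F)$, and conversely spectrally decomposing a rank $1$ or $2$ element of $\Nul(F)$ to produce such a pair. Your treatment of the semidefinite rank-$2$ case, using $0=\Tr(QP_n)=\sum_{d}\inner{\varphi_{n,d}}{Q\varphi_{n,d}}$ with each summand nonnegative to force $W_n\subseteq\ker Q$, is precisely the step that makes the transplanted proof go through, so nothing is missing.
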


Since we know $2M-1$ vectors are necessary for phase retrieval, one would hope that the close similarities between the characterizations in Lemma \ref{lemma1} and Corollary \ref{char} would provide insight into the minimal number of subspaces required for phase retrieval.  Unfortunately it is difficult to draw any comparison between the two problems in this regard.  The main issue here is that the space of rank 1 and rank 2 operators do not form a subspace in $\HH^{M\times M}$, and null spaces of $F$ and $G$ may (or may not) intersect this space in fundamentally different ways.  The minimal number $2M-1$ arises for phase retrieval with vector measurements since this is the fewest number of vectors which may have the complement property in $\RR^M$. We will now develop a characterization of subspaces $\{W_n\}_{n=1}^N$ allowing phase retrieval which is akin to the complement property; but this also falls short to providing a minimal number of subspaces required.  To accomplish this, we give a few preliminary results, the first of which follows from Lemma \ref{lemma1} and Corollary \ref{char}.

\begin{corollary}\label{cor2}
Suppose $\{W_n\}_{n=1}^N$ are subspaces allowing phase retrieval for $\RR^M$.  If $\{\varphi_{n,d}\}_{d=1}^{D_n}$ is an orthonormal basis for $W_n$ for each $n=1,\ldots,N$, then $\Phi=\{\varphi_{n,d}\}_{n=1,d=1}^{N,D_n}$ allows phase retrieval for $\RR^M$.
\end{corollary}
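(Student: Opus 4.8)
The plan is to argue at the level of the null space characterizations supplied by Lemma \ref{lemma1} and Corollary \ref{char}, proving the contrapositive implicitly through a containment of null spaces. The crucial observation is that the orthogonal projection onto $W_n$ decomposes in terms of the given orthonormal basis as $P_n=\sum_{d=1}^{D_n}\varphi_{n,d}\varphi_{n,d}^*$. Consequently the subspace measurement operator $F$ and the vector measurement operator $G$ attached to $\Phi$ are linked by
\[
	FA(n)=\inner{A}{P_n}_{HS}=\sum_{d=1}^{D_n}\inner{A}{\varphi_{n,d}\varphi_{n,d}^*}_{HS}=\sum_{d=1}^{D_n}GA(n,d).
\]
In other words, each coordinate of $F$ is an aggregate (a block sum) of a corresponding group of coordinates of $G$.

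First I would deduce from this identity the inclusion $\Nul(G)\subseteq \Nul(F)$. Indeed, if $A\in\Nul(G)$, then $GA(n,d)=0$ for every $n$ and every $d$, and summing over $d$ forces $FA(n)=0$ for all $n$, so $A\in\Nul(F)$. Next, since $\{W_n\}_{n=1}^N$ allows phase retrieval, Corollary \ref{char} guarantees that $\Nul(F)$ contains no matrix of rank $1$ or $2$. Combining this with the inclusion just established, $\Nul(G)$ can contain no matrix of rank $1$ or $2$ either, as any such matrix would simultaneously lie in $\Nul(F)$. Finally, Lemma \ref{lemma1} translates the absence of rank-$1$ and rank-$2$ matrices in $\Nul(G)$ back into the assertion that $\Phi$ allows phase retrieval, completing the argument.

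I do not anticipate a genuine obstacle here; the whole proof reduces to a null space containment, and the only step requiring any care is the elementary identity $P_n=\sum_d\varphi_{n,d}\varphi_{n,d}^*$, which is what produces the block-sum relationship between $F$ and $G$. For completeness I note that one could bypass the lifting entirely and argue directly by contradiction: were $\Phi$ to fail phase retrieval, there would exist $x\neq\pm y$ with $\abs{\inner{x}{\varphi_{n,d}}}=\abs{\inner{y}{\varphi_{n,d}}}$ for all $n,d$, whence $\norm{P_nx}^2=\sum_d\abs{\inner{x}{\varphi_{n,d}}}^2=\sum_d\abs{\inner{y}{\varphi_{n,d}}}^2=\norm{P_ny}^2$ for every $n$, contradicting injectivity of $\cA$. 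I would nonetheless present the null space version, since the corollary is advertised as a consequence of Lemma \ref{lemma1} and Corollary \ref{char} and this framing makes the logical dependence transparent.
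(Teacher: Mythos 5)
Your proposal is correct and is essentially identical to the paper's own proof: both use the decomposition $P_n=\sum_{d=1}^{D_n}\varphi_{n,d}\varphi_{n,d}^*$ to establish the containment $\Nul(G)\subseteq \Nul(F)$ and then conclude by combining Lemma \ref{lemma1} with Corollary \ref{char}. The direct contradiction argument you sketch at the end is also valid in the real setting, but your main presentation matches the paper's route exactly.
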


\begin{proof}
For $\Phi=\{\varphi_{n,d}\}_{n=1,d=1}^{N,D_n}$, consider the operator $G$ as in Lemma \ref{lemma1}.  If $A\in \Nul(G)$ then $\inner{A}{\varphi_{n,d}\varphi_{n,d}^*}_{HS}=0$ for all choices of $n$ and $d$.  Since $P_n=\sum_{d=1}^{D_n}\varphi_{n,d}\varphi_{n,d}^*$, then $\inner{A}{P_n}_{HS}=0$ implying $A\in \Nul(F)$.  Thus $\Nul(G)\subseteq \Nul(F)$ and the result follows from Lemma \ref{lemma1} and Corollary \ref{char}.
\end{proof}

\begin{lemma}\label{lem3}
Let $P$ be the orthogonal projection onto an $N$-dimensional subspace $W\subseteq \RR^M$.  Given $x,y\in \RR^M$ the following are equivalent:
\begin{enumerate}[(a)]
	\item $\norm{Px}=\norm{Py}$ \label{a}
	\item There exists an orthonormal basis $\{\varphi_n\}_{n=1}^N$ for $W$ such that $\abs{\inner{x}{\varphi_n}}=\abs{\inner{y}{\varphi_n}}$ for all $n=1,\ldots,N$. \label{b}
\end{enumerate}
\end{lemma}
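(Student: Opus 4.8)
The plan is to prove the two implications separately, with the reverse direction (a) $\Rightarrow$ (b) carrying all of the content.

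The direction (b) $\Rightarrow$ (a) is immediate from Parseval's identity: if $\{\varphi_n\}_{n=1}^N$ is an orthonormal basis for $W$ with $\abs{\inner{x}{\varphi_n}}=\abs{\inner{y}{\varphi_n}}$ for every $n$, then, using $\inner{x}{\varphi_n}=\inner{Px}{\varphi_n}$ since $\varphi_n\in W$,
\[
	\norm{Px}^2=\sum_{n=1}^N\abs{\inner{x}{\varphi_n}}^2=\sum_{n=1}^N\abs{\inner{y}{\varphi_n}}^2=\norm{Py}^2 .
\]

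For the converse I would first reduce everything to the two vectors $u:=Px$ and $v:=Py$, which lie in $W$ and satisfy $\norm{u}=\norm{v}$ by hypothesis. Because $\varphi_n\in W$ and $P$ is self-adjoint, $\inner{x}{\varphi_n}=\inner{u}{\varphi_n}$ and $\inner{y}{\varphi_n}=\inner{v}{\varphi_n}$, so the condition I want is equivalent to $\inner{u}{\varphi_n}^2=\inner{v}{\varphi_n}^2$. The guiding observation is the factorization
\[
	\inner{u}{\varphi_n}^2-\inner{v}{\varphi_n}^2=\inner{u+v}{\varphi_n}\,\inner{u-v}{\varphi_n},
\]
so what I must produce is an orthonormal basis of $W$ in which every $\varphi_n$ is orthogonal either to $u+v$ or to $u-v$.

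The crux is that the hypothesis $\norm{u}=\norm{v}$ supplies exactly the orthogonality needed for this: $\inner{u+v}{u-v}=\norm{u}^2-\norm{v}^2=0$. Hence $u+v$ and $u-v$ are orthogonal vectors of $W$, and I can build the basis explicitly. Letting $S=\spann\{u+v,u-v\}$, an orthonormal basis of $S$ is obtained by normalizing whichever of $u+v$, $u-v$ is nonzero (since they are orthogonal, each such normalized vector is automatically orthogonal to the other), and I then complete it to an orthonormal basis of $W$ by appending any orthonormal basis of $S^\perp\cap W$. Every appended vector is orthogonal to both $u+v$ and $u-v$; the normalized $u+v$ is orthogonal to $u-v$; and the normalized $u-v$ is orthogonal to $u+v$. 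Thus each basis vector annihilates one factor above, giving $\inner{u}{\varphi_n}^2=\inner{v}{\varphi_n}^2$ for all $n$, as required.

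The only real care needed is bookkeeping in the degenerate cases $u+v=0$ or $u-v=0$, where one distinguished vector disappears and $S$ has dimension at most one; there the equality $\abs{\inner{x}{\varphi_n}}=\abs{\inner{y}{\varphi_n}}$ holds for \emph{any} orthonormal basis, so the construction degenerates harmlessly. I do not anticipate a genuine obstacle: once the identity $\inner{u+v}{u-v}=0$ is in hand the basis is essentially forced, and the main step is simply recognizing that the difference-of-squares factorization converts the norm equality into a statement about simultaneous orthogonality.
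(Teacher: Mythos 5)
Your proposal is correct and takes essentially the same route as the paper: both exploit the identity $\inner{Px+Py}{Px-Py}=\norm{Px}^2-\norm{Py}^2=0$, normalize $Px+Py$ and $Px-Py$ to begin an orthonormal basis of $W$, and complete it arbitrarily within $W$. The only cosmetic difference is that you verify $\abs{\inner{x}{\varphi_n}}=\abs{\inner{y}{\varphi_n}}$ via the difference-of-squares factorization $\inner{u}{\varphi_n}^2-\inner{v}{\varphi_n}^2=\inner{u+v}{\varphi_n}\,\inner{u-v}{\varphi_n}$, whereas the paper expands the inner products directly, and the degenerate cases you track explicitly are exactly those the paper dismisses as trivial under the assumption $Px\neq\pm Py$.
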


\begin{proof}
(\ref{a})$ \Rightarrow $(\ref{b}):  Consider the vectors $Px,Py\in W$.  We may assume $Px\neq \pm Py$ as the other case is trivial.  Let
\[
	\varphi_1:= \frac{Px+Py}{\norm{Px+Py}}, \  \ \varphi_2:= \frac{Px-Py}{\norm{Px-Py}}.
\]
Letting $c=1/(\norm{Px+Py} \norm{Px-Py})$,
\[
	\inner{\varphi_1}{\varphi_2}=c \inner{Px+Py}{Px-Py}= c (\norm{Px}^2-\norm{Py}^2 + \inner{Py}{Px}-\inner{Px}{Py})=0.
\]
Given these two orthonormal vectors, take $\{\varphi_n\}_{n=1}^N$ to be any completion of $\{\varphi_1,\varphi_2\}$ to an orthonormal basis for $W$. Note that $Px, Py \in \mbox{span}\{\varphi_1,\varphi_2\}$. Thus since $\{\varphi_i\}_{i=1}^N$ is an orthonormal basis for $W$ then 
\[
	\inner{x}{\varphi_n}=\inner{x}{P\varphi_n}=\inner{Px}{\varphi_n}=0, 
\] 
and similarly $\inner{y}{\varphi_n}=0$ for all $n=3,\ldots,N$.  We also have
\begin{align*}
	\abs{\inner{x}{Px + Py}}	&=\abs{\norm{Px}^2+ \inner{Px}{Py}}=\abs{\norm{Py}^2+ \inner{Py}{Px}}\\
															&=\abs{\inner{Py}{Py + Px}}=\abs{\inner{y}{Px + Py}}
\end{align*}
and similarly $\abs{\inner{x}{Px - Py}}=\abs{\inner{y}{Px - Py}}$ for $n=1,2$.  Hence $\abs{\inner{x}{\varphi_n}}=\abs{\inner{y}{\varphi_n}}$ for all $n=1,\ldots,N$.

(\ref{b})$ \Rightarrow $(\ref{a}): This is immediate since
\[
	\norm{Px}^2=\sum_{n=1}^N \abs{\inner{\varphi_n}{Px}}^2=\sum_{n=1}^N \abs{\inner{\varphi_n}{x}}^2=\sum_{n=1}^N \abs{\inner{\varphi_n}{y}}^2=\norm{Py}^2.
\]
\end{proof}

Combining Corollary \ref{cor2} and Lemma \ref{lem3}, we arrive at a characterization for when $\{W_n\}_{n=1}^N$ allows phase retrieval in $\RR^M$ in terms of the complement property.

\begin{theorem} \label{main}
Let $\{W_n\}_{n=1}^N$ be subspaces of $\RR^M$.  The following are equivalent:
\begin{enumerate}[(a)]
	\item $\{W_n\}_{n=1}^N$ allows phase retrieval for $\RR^M$. \label{aa}
	\item For every orthonormal basis $\{\varphi_{n,d}\}_{d=1}^{D_n}$ of $W_n$, the set $\{\varphi_{n,d}\}_{n=1, d=1}^{N, \ D_n}$ allows phase retrieval in $\RR^M$ and thus has the complement property. \label{bb}
\end{enumerate}
\end{theorem}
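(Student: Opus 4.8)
The plan is to prove the two implications separately, using Theorem \ref{original}, which identifies ``allows phase retrieval'' with ``has the complement property'' for a family of vectors. This lets me read condition (\ref{bb}) simply as the universal assertion that every union of orthonormal bases of the $W_n$ allows phase retrieval, and I will carry the complement-property rephrasing along for free at the end.

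For (\ref{aa})$\Rightarrow$(\ref{bb}) I would just invoke Corollary \ref{cor2}. If $\{W_n\}_{n=1}^N$ allows phase retrieval, then for \emph{any} choice of orthonormal basis $\{\varphi_{n,d}\}_{d=1}^{D_n}$ of each $W_n$, the union $\{\varphi_{n,d}\}_{n=1,d=1}^{N,D_n}$ allows phase retrieval; by Theorem \ref{original} it therefore has the complement property. Since the bases were arbitrary, this is precisely the ``for every'' statement in (\ref{bb}), so this direction is essentially immediate.

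The substantive direction is (\ref{bb})$\Rightarrow$(\ref{aa}), which I would prove by contraposition. Suppose $\{W_n\}_{n=1}^N$ does not allow phase retrieval, so there exist $x\neq\pm y$ with $\norm{P_nx}=\norm{P_ny}$ for all $n$. For each $n$, apply Lemma \ref{lem3} in the direction (\ref{a})$\Rightarrow$(\ref{b}) to the projection $P_n$: this produces an orthonormal basis $\{\varphi_{n,d}\}_{d=1}^{D_n}$ of $W_n$ with $\abs{\inner{x}{\varphi_{n,d}}}=\abs{\inner{y}{\varphi_{n,d}}}$ for every $d$. Merging these particular bases across $n$ yields one family $\Phi=\{\varphi_{n,d}\}_{n=1,d=1}^{N,D_n}$ for which $\abs{\inner{x}{\varphi_{n,d}}}=\abs{\inner{y}{\varphi_{n,d}}}$ holds for all $n,d$; since $x\neq\pm y$, this $\Phi$ does not allow phase retrieval, and hence by Theorem \ref{original} fails the complement property. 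This exhibits a specific choice of orthonormal bases violating (\ref{bb}), which completes the contrapositive.

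The main subtlety — and the reason the statement needs \emph{every} orthonormal basis rather than \emph{some} — is the matching of quantifiers across the two directions. The forward implication delivers a conclusion holding for all bases (Corollary \ref{cor2}), whereas the reverse implication only needs to manufacture a single offending basis, and Lemma \ref{lem3} is exactly the mechanism that builds a basis adapted to the bad pair $(x,y)$. I expect no difficulty in the computations themselves; the care required is in tracking these quantifiers and in observing that the per-subspace bases supplied by Lemma \ref{lem3} can be chosen independently for each $n$ and then concatenated into a single family in $\RR^M$.
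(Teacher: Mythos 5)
Your proposal is correct and follows essentially the same route as the paper's proof: the forward implication is exactly Corollary \ref{cor2} (with Theorem \ref{original} supplying the complement-property rephrasing), and the reverse implication rests on Lemma \ref{lem3} building a basis adapted to the pair $(x,y)$ with matching projection norms. The only difference is that you phrase the reverse direction contrapositively while the paper argues directly (any $x,y$ with $\norm{P_nx}=\norm{P_ny}$ for all $n$ must satisfy $x=\pm y$), which is a purely cosmetic rearrangement of the same argument.
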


\begin{proof}
(\ref{aa})$\Rightarrow$ (\ref{bb}): This is Corollary \ref{cor2}.

(\ref{bb})$\Rightarrow$ (\ref{aa}):  Suppose we have $x,y \in \RR^M$ such that $\norm{P_nx}=\norm{P_ny}$ for all $n=1,\ldots,N$.  By Lemma \ref{lem3}, we may choose an orthonormal basis $\{\varphi_{n,d}\}_{d=1}^{D_n}$ for each $W_n$ so that $\abs{\inner{x}{\varphi_{n,d}}}=\abs{\inner{y}{\varphi_{n,d}}}$ for all $d=1,\ldots, D_n$ and all $n=1,\ldots,N$.  Since our assumption guarantees $\{\varphi_{n,d}\}_{n=1, d=1}^{N, \ D_n}$ allows phase retrieval, we have $x=\pm y$.  Thus $\{W_n\}_{n=1}^N$ must allow phase retrieval.
\end{proof}

The complement property is a convenient property with which to work, and this is why we present the above theorem in terms of orthonormal bases.  However, the proof of this theorem doesn't require us to consider only vectors.  Instead, the same general arguments hold if we take each $W_n$ and split this subspace into orthogonal subspaces which span $W_n$. 

\begin{corollary}
Let $\{W_n\}_{n=1}^N$ be subspaces of $\RR^M$.  The following are equivalent:
\begin{enumerate}[(a)]
	\item $\{W_n\}_{n=1}^N$ allow phase retrieval for $\RR^M$.
	\item For every choice of orthogonal subspaces $\{Z_{n,d}\}_{d=1}^{D_n}$ where  $\bigoplus_{d=1}^{D_n}Z_{n,d}=W_n$, the subspaces $\{Z_{n,d}\}_{n=1,d=1}^{N,\ \ D_n}$ allow phase retrieval in $\RR^M$. 
\end{enumerate}
\end{corollary}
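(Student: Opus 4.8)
The plan is to mirror the proof of Theorem \ref{main}, replacing the passage to orthonormal bases with the coarser passage to orthogonal subspaces, and to lean on the operator characterization in Corollary \ref{char}. The implication (b) $\Rightarrow$ (a) I would dispatch immediately: the trivial decomposition $D_n = 1$, $Z_{n,1} = W_n$ is one admissible choice of orthogonal subspaces with $\bigoplus_{d} Z_{n,d} = W_n$, so (b) applied to this choice is literally the assertion (a).

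For the substantive direction (a) $\Rightarrow$ (b), I would argue at the level of the lifted operator $F$, generalizing Corollary \ref{cor2}. Fix any admissible decomposition $\{Z_{n,d}\}$ and let $Q_{n,d}$ be the orthogonal projection onto $Z_{n,d}$. The key algebraic fact is that pairwise orthogonality of the $Z_{n,d}$ (for fixed $n$) together with $\bigoplus_d Z_{n,d} = W_n$ forces $P_n = \sum_d Q_{n,d}$. Letting $F$ and $F'$ denote the measurement operators of $\{W_n\}$ and $\{Z_{n,d}\}$ respectively, any $A$ with $\inner{A}{Q_{n,d}}_{HS}=0$ for all $d$ then satisfies $\inner{A}{P_n}_{HS} = \sum_d \inner{A}{Q_{n,d}}_{HS} = 0$; hence $\Nul(F') \subseteq \Nul(F)$. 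Since $\{W_n\}$ allows phase retrieval, Corollary \ref{char} guarantees $\Nul(F)$ contains no matrix of rank $1$ or $2$, so neither does the smaller space $\Nul(F')$, and applying Corollary \ref{char} in reverse yields that $\{Z_{n,d}\}$ allows phase retrieval.

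The only real subtlety, and the step I would be careful to pin down, is the identity $P_n = \sum_d Q_{n,d}$: it uses that the summands are \emph{pairwise} orthogonal, not merely that they form an internal direct sum, so I would make explicit that ``orthogonal subspaces'' in the hypothesis means pairwise orthogonal (otherwise $P_n \neq \sum_d Q_{n,d}$ and the null-space inclusion breaks). An alternative, purely reductive proof of (a) $\Rightarrow$ (b) sidesteps even this computation: choose an orthonormal basis of each $Z_{n,d}$; for fixed $n$ these assemble into an orthonormal basis of $W_n$, so by Theorem \ref{main} (a)$\Rightarrow$(b) the full collection of basis vectors allows phase retrieval, whereupon Theorem \ref{main} (b)$\Rightarrow$(a) applied to the family $\{Z_{n,d}\}$ delivers the conclusion. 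Either route works, but I expect the null-space inclusion to be the cleaner and more transparent presentation, and it is the one the remark preceding the statement is pointing toward.
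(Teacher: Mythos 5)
Your proposal is correct and takes essentially the same approach the paper intends: the paper omits an explicit proof, remarking only that the arguments of Theorem \ref{main} carry over, and your null-space inclusion $\Nul(F')\subseteq \Nul(F)$ via $P_n=\sum_{d}Q_{n,d}$ together with Corollary \ref{char} is exactly the subspace analog of Corollary \ref{cor2} that this remark points to. Your shortcut for (b) $\Rightarrow$ (a) via the trivial decomposition $D_n=1$, $Z_{n,1}=W_n$ is a mild simplification of the Theorem \ref{main} template (no Lemma \ref{lem3} analog is needed), and your caveat that $P_n=\sum_d Q_{n,d}$ requires the summands to be \emph{pairwise} orthogonal is well placed.
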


\section{Additional properties of subspaces regarding phase retrieval} \label{sec3}

At this point we have given several abstract characterizations of the subspaces $\{W_n\}_{n=1}^N$ which allow phase retrieval in $\RR^M$.  Note however the only subspaces $\{W_n\}_{n=1}^N$ which we have concretely shown to satisfy these characterizations are highly structured.  That is, the only subspaces which we have shown to allow phase retrieval are those constructed in Theorem \ref{phaseless}.  For the special case when $\{W_n\}_{n=1}^N$ are hyperplanes, we will overcome this restriction of structure and produce highly non-structured subspaces which alllow phase retrieval in section \ref{sec4}. In general however, we believe any $2M-1$ random subspaces should also allow phase retrieval much as $2M-1$ random vectors allow phase retrieval \cite{BBCE}.  While we cannot prove this, we take an incremental step by showing the subspaces which allow phase retrival are open in some sense.  Specifically, we show when given subspaces $\{W_n\}_{n=1}^N$ which allow phase retrieval, there exist open balls $B_n(W_n,\epsilon)$ around each $W_n$ such that for any $W'_n\in B_n(W_n,\epsilon)$, the subspaces $\{W'_n\}_{n=1}^N$ allow phase retrieval.  We again require a few preliminary results to build to this end.  

First we show that subspaces $\{W_n\}_{n=1}^N$ allow phase retrieval if and only if we cannot  find nonzero orthogonal vectors $x,y\in \RR^M$ such that $\cA x=\cA y$.  This is useful in that to show $\{W_n\}_{n=1}^N$ allows phase retrieval, we need only show $\cA(x)\neq \cA(y)$ for all $x\perp y$ rather than all $x\neq \pm y$.  We note that the argument used to prove this result may be extracted from the arguments needed for Lemma \ref{lemma1}.

\begin{lemma}\label{orth}
Subspaces $\{W_n\}_{n=1}^N$ do not allow phase retrieval if and only if there exists nonzero $u,v \in \RR^M$ with $u\perp v$ such that $\norm{P_nu}=\norm{P_nv}$ for all $n=1,\ldots, N$.
\end{lemma}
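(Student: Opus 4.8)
The plan is to abandon any attempt to produce the witnesses $x,y$ or $u,v$ by hand and instead pass to the lifted operator picture, using the characterization in Corollary \ref{char}: $\{W_n\}_{n=1}^N$ fails phase retrieval exactly when $\Nul(F)$ contains a symmetric matrix of rank $1$ or $2$. Both implications then become statements about such low-rank matrices via the identity $\inner{uu^*}{P_n}_{HS} = \norm{P_n u}^2$. The reverse implication is the easy one and I would dispatch it first: given nonzero $u \perp v$ with $\norm{P_n u} = \norm{P_n v}$ for all $n$, set $Q := uu^* - vv^*$, so that $\inner{Q}{P_n}_{HS} = \norm{P_n u}^2 - \norm{P_n v}^2 = 0$ and hence $Q \in \Nul(F)$. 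Because $u,v$ are nonzero and orthogonal, $Q$ has eigenvalues $\norm{u}^2 > 0$ and $-\norm{v}^2 < 0$, so its rank is exactly $2$, and Corollary \ref{char} finishes the direction. This is where orthogonality earns its keep: it pins down the eigenstructure of $Q$ and guarantees the rank is genuinely $2$ (a naive attempt to use $x=u+v$, $y=u-v$ instead stumbles, since $\norm{P_nx}^2-\norm{P_ny}^2 = 4\inner{P_nu}{P_nv}$ need not vanish).

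For the forward implication I would begin with a symmetric $Q \in \Nul(F)$ of rank $1$ or $2$ handed to us by Corollary \ref{char}, and diagonalize it as $Q = \lambda_1 e_1 e_1^* + \lambda_2 e_2 e_2^*$ with $e_1 \perp e_2$ orthonormal and at most two nonzero eigenvalues. The null-space condition then reads $\lambda_1 \norm{P_n e_1}^2 + \lambda_2 \norm{P_n e_2}^2 = 0$ for every $n$. If $Q$ is indefinite, say $\lambda_1 > 0 > \lambda_2$, then $u := \sqrt{\lambda_1}\,e_1$ and $v := \sqrt{-\lambda_2}\,e_2$ are orthogonal and nonzero, and the relation above rearranges to exactly $\norm{P_n u}^2 = \norm{P_n v}^2$, so this case is immediate.

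The remaining and genuinely delicate case is when $Q$ is semidefinite, which in particular covers every rank-$1$ matrix; after replacing $Q$ by $-Q$ I may assume $Q \succeq 0$. Then $\inner{Q}{P_n}_{HS} = \Tr(P_n Q P_n) = 0$ with $P_n Q P_n \succeq 0$ forces $P_n Q P_n = 0$, hence $P_n Q = 0$, so the (nonzero) range of $Q$ lies in $\bigcap_{n} W_n^\perp$. Choosing $0 \neq z$ in this common kernel and any $w \perp z$ with $\norm{w} = \norm{z}$ (available since $M \geq 2$), the vectors $u := z + w$ and $v := z - w$ are orthogonal and nonzero, and since $P_n z = 0$ we get $\norm{P_n u} = \norm{P_n w} = \norm{P_n v}$ for all $n$.

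I expect this semidefinite branch to be the main obstacle. In the indefinite case the desired orthogonal pair is read off directly from the spectral pieces of $Q$, but when $Q$ is (semi)definite the equal-measurement pair is simply not visible in $Q$ itself; one must instead manufacture it from a common-kernel vector, and this step quietly consumes the hypothesis $M \geq 2$. The only subtlety to watch is the trace computation $\Tr(P_nQP_n)=0 \Rightarrow P_nQ=0$, which rests on $P_n Q P_n$ being positive semidefinite with vanishing trace; everything else is bookkeeping.
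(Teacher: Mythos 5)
Your proposal is correct, and it works in the same lifted space of symmetric operators as the paper's proof, but it is organized in the opposite direction and is in fact more complete. The paper starts from witnesses $x\neq\pm y$ of failed phase retrieval, forms $xx^*-yy^*\in\Nul(F)$, applies the spectral theorem to write it as $\lambda_1uu^*+\lambda_2vv^*$, argues $\lambda_1,\lambda_2$ have opposite signs, and rescales the eigenvectors; the converse it dismisses as obvious, since orthogonal nonzero $u,v$ automatically satisfy $u\neq\pm v$ and are themselves witnesses (your detour through Corollary \ref{char} and the rank of $uu^*-vv^*$ for that direction is valid but unnecessary). You instead use Corollary \ref{char} as the pivot in both directions and convert an arbitrary rank-$1$-or-$2$ element of $\Nul(F)$ into an orthogonal pair. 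The payoff of your organization is the semidefinite branch: the paper's proof asserts that $xx^*-yy^*$ has rank two and that $\norm{P_nu}, \norm{P_nv}>0$ whenever $u,v\neq 0$, and both assertions can fail --- exactly when the witnesses are parallel ($y=cx$ with $\abs{c}\neq 1$, forcing $P_nx=0$ for all $n$), equivalently when a semidefinite rank-$1$ or rank-$2$ matrix lies in $\Nul(F)$. Your argument there ($\Tr(P_nQP_n)=0$ with $P_nQP_n\succeq 0$ forces $P_nQ=0$, so the range of $Q$ sits in $\bigcap_n W_n^\perp$; then take $z$ in that common kernel, $w\perp z$ of equal norm, and $u=z+w$, $v=z-w$) is precisely what is needed to close this gap. (For completeness, the paper's indefinite case can also be salvaged without your branch when $x,y$ are linearly independent, since $xx^*-yy^*$ is then automatically indefinite, but the parallel-witness case genuinely requires your construction.) Your remark that the construction consumes $M\geq 2$ is also on point: for $M=1$ with $W_1=\{0\}$ the lemma as stated fails, a hypothesis the paper leaves implicit.
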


\begin{proof}
The necessity direction is obvious, so for sufficiency, suppose $\{W_n\}_{n=1}^N$ do not allow phase retrieval.  Then there exists nonzero $x,y \in \RR^M$ with $x\neq \pm y$ such that $\norm{P_nx}=\norm{P_ny}$ for all $n=1,\ldots,N$.  In the operator space, this implies $F(xx^*)=F(yy^*)$, so that $xx^*-yy^*$ is in the null space of $F$, where $F$ is the linear operator as defined in the beginning of Section 3.  Note that $xx^*-yy^*$ is a rank two, symmetric operator. Thus by the Spectral Theorem, there exist orthogonal eigenvectors $u,v \in \RR^M$ and nonzero scalars $\lambda_1,\lambda_2 \in \RR$ such that $xx^*-yy^*=\lambda_1uu^*+\lambda_2vv^*$.  Then for all $n=1,\ldots,N$ we have,
\[
	0=F(\lambda_1uu^*+\lambda_2vv^*)(n)=\lambda_1 \norm{P_n u}^2+\lambda_2 \norm{P_n v}^2.
\] 

Since $u,v$ are nonzero, it follows that $\norm{P_n u}^2>0$ and $\norm{P_n v}^2>0$. Hence $\lambda_1$ and $\lambda_2$ must have opposite signs, which implies that 
\[
	\abs{\lambda_1}\norm{P_n u}^2=\abs{\lambda_2} \norm{P_n v}^2.
\]
Thus $u/\sqrt{\abs{\lambda_1}}$, and  $v/\sqrt{\abs{\lambda_2}}$ are the orthogonal vectors we seek.
\end{proof}

\begin{lemma}\label{lemon}
Suppose $\{W_n\}_{n=1}^N$ are subspaces allowing phase retrieval for $\RR^M$.  Then there exists a $\delta>0$ such that for any $x\perp y$ where $1=\norm{x}\geq \norm{y}>0$, there exists an $1\leq n \leq N$ such that
\[
	\abs{\norm{P_nx}-\norm{P_ny}}>\delta.
\]
\end{lemma}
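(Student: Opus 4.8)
The plan is to argue by compactness. Since each measurement $\norm{P_n(\cdot)}$ scales linearly, no uniform additive gap can hold without fixing the scale; this is exactly why the hypothesis normalizes $\norm{x}=1$ and caps $\norm{y}\leq 1$, and together with $x\perp y$ this confines the admissible pairs $(x,y)$ to a set whose closure is compact. So I would define
\[
	f(x,y):=\max_{1\leq n\leq N}\abs{\norm{P_nx}-\norm{P_ny}},
\]
which is continuous on $\RR^M\times\RR^M$ as a finite maximum of continuous functions, and set
\[
	D:=\{(x,y)\in\RR^M\times\RR^M:\norm{x}=1,\ x\perp y,\ 0\leq\norm{y}\leq 1\}.
\]
This $D$ is closed and bounded, hence compact, and it includes the limiting boundary $\norm{y}=0$ that the lemma formally excludes but which is harmless to retain. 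By compactness $f$ attains a minimum on $D$ at some $(x_0,y_0)$, and it suffices to show this minimum is strictly positive; any $\delta$ strictly below it then works for all admissible pairs with $0<\norm{y}\leq 1$.

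Next I would rule out $f(x_0,y_0)=0$ in two cases. Suppose $\norm{P_nx_0}=\norm{P_ny_0}$ for every $n$. If $y_0\neq 0$, then $x_0$ and $y_0$ are nonzero orthogonal vectors with $\norm{P_nx_0}=\norm{P_ny_0}$ for all $n$; by Lemma \ref{orth} this forces $\{W_n\}_{n=1}^N$ to fail phase retrieval, contradicting the hypothesis. If instead $y_0=0$, then $\norm{P_nx_0}=0$ for all $n$, so $x_0$ is orthogonal to every $W_n$; since $\norm{x_0}=1$ gives $x_0\neq 0$, this means $\cA(x_0)=0=\cA(0)$, contradicting injectivity of $\cA$. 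Hence $f(x_0,y_0)=:\delta_0>0$, and choosing $\delta:=\delta_0/2$ yields $f(x,y)\geq\delta_0>\delta$ for every $(x,y)\in D$, which is precisely the desired conclusion.

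I expect the boundary $\norm{y}=0$ to be the main obstacle. The set of admissible pairs with $\norm{y}>0$ is not closed, so a naive "minimum attained and positive" argument could break down if the infimum were approached only as $y\to 0$; this is exactly the $y_0=0$ case above. The resolution is that phase retrieval forces $\sum_{n=1}^N W_n=\RR^M$, equivalently that no nonzero vector is annihilated by all the $P_n$, so even the degenerate limit $y=0$ produces a strictly positive value of $f$. Retaining the closed boundary in $D$ therefore costs nothing and lets compactness do the work, while the two-case analysis cleanly separates the genuine orthogonal obstruction (handled by Lemma \ref{orth}) from the degenerate annihilation obstruction (handled by injectivity). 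Everything else is routine continuity and compactness.
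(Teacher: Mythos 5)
Your proof is correct and takes essentially the same route as the paper: the paper runs the compactness argument sequentially, extracting convergent subsequences from a hypothetical sequence of violating pairs and ruling out $y_j\rightarrow 0$ by noting that some $P_nx\neq 0$, whereas you run it via the extreme value theorem on the compactified domain that includes $\norm{y}=0$, dispatching that boundary case with the same fact (a unit vector annihilated by every $P_n$ would give $\cA(x_0)=\cA(0)$ and violate injectivity). Both arguments rest on the identical two ingredients, compactness and the observation underlying Lemma \ref{orth} that nonzero orthogonal vectors with equal measurements contradict phase retrieval, so your handling of the $y=0$ boundary is just the minimization-form counterpart of the paper's subsequence step.
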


\begin{proof}
We proceed by way of contradiction.  Assume $\{W_n\}_{n=1}^N$ allow phase retrieval, and for every $j=1,2,\ldots$ there exists $x_j\perp y_j$ where $1=\norm{x_j}\geq \norm{y_j}>0$ so that
\begin{equation}\label{allow}
		\abs{\norm{P_nx}-\norm{P_ny}}\leq\ \frac{1}{j}
\end{equation}
for every $n=1,\ldots,N$.  By switching to a subsequence, we may assume $x_j\rightarrow x$ with $\norm{x}=1$.  Note there exists some $n$ such that $P_nx\neq 0$ for otherwise $\{W_n\}_{n=1}^N$ would not allow phase retrieval.  By (\ref{allow}), this also implies $P_n y_j$ does not converge to zero for some $n$ and thus $y_j$ cannot converge to zero.  We therefore switch to a further subsequence such that $x_n\rightarrow x$, $y_n\rightarrow y$, $1=\norm{x}\geq \norm{y}>0$, and $x\perp y$.  Moreover, equation (\ref{allow}) now implies $\norm{P_nx}=\norm{P_ny}$ for all $n=1,\ldots,N$.  We conclude $\{W_n\}_{n=1}^N$ does not allow phase retrieval - a contradiction.
\end{proof}

Combining these lemmas, we have the desired theorem.

\begin{theorem} \label{open}
Suppose $\{W_n\}_{n=1}^N$ are subspaces allowing phase retrieval for $\RR^M$.  Let $\{W'_n\}_{n=1}^N$ be subspaces with associated orthogonal projections $Q_n$.  Then there exists an $\epsilon>0$ such that when $\norm{P_n-Q_n}<\epsilon$ for all $n=1,\ldots,N$, then $\{W'_n\}_{n=1}^N$ allow phase retrieval.
\end{theorem}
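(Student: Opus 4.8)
The plan is to convert the qualitative statement that $\{W_n\}_{n=1}^N$ allows phase retrieval into a quantitative, uniform separation, and then show that this separation survives a sufficiently small perturbation of the projections. The two tools I would use are Lemma \ref{orth}, which detects failure of phase retrieval through orthogonal witnesses, and Lemma \ref{lemon}, which supplies the uniform gap. First I would apply Lemma \ref{lemon} to $\{W_n\}_{n=1}^N$ to obtain a $\delta>0$ such that for every pair $x\perp y$ with $1=\norm{x}\geq\norm{y}>0$ there is an index $n$ with $\abs{\norm{P_nx}-\norm{P_ny}}>\delta$. I would then set $\epsilon:=\delta/2$ and argue by contradiction.

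Suppose $\norm{P_n-Q_n}<\epsilon$ for all $n$ but $\{W'_n\}_{n=1}^N$ fails phase retrieval. By Lemma \ref{orth} there exist nonzero $u\perp v$ with $\norm{Q_nu}=\norm{Q_nv}$ for every $n$. Since scaling $u$ and $v$ by a common positive factor preserves both their orthogonality and these equalities, and since the roles of $u$ and $v$ are symmetric, I may normalize and relabel so that $1=\norm{u}\geq\norm{v}>0$. This places the pair $(u,v)$ exactly in the regime where the gap $\delta$ from Lemma \ref{lemon} applies.

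The heart of the argument is a perturbation estimate forcing $\norm{P_nu}$ and $\norm{P_nv}$ to be close. Using $\norm{(P_n-Q_n)w}\leq\norm{P_n-Q_n}\,\norm{w}<\epsilon$ for $w\in\{u,v\}$ (both of norm at most one) together with the reverse triangle inequality, I obtain
\[
	\abs{\norm{P_nu}-\norm{Q_nu}}<\epsilon \quad\text{and}\quad \abs{\norm{P_nv}-\norm{Q_nv}}<\epsilon
\]
for every $n$. Combining these with $\norm{Q_nu}=\norm{Q_nv}$ gives $\abs{\norm{P_nu}-\norm{P_nv}}<2\epsilon=\delta$ for all $n$, which directly contradicts the conclusion of Lemma \ref{lemon} for $(u,v)$. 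Hence no such witnesses exist and $\{W'_n\}_{n=1}^N$ allows phase retrieval, with any $\epsilon\leq\delta/2$ working.

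I expect the only delicate point to be the normalization in the second step: one must verify that rescaling by a common factor and swapping labels is legitimate, i.e.\ that it keeps the witnesses orthogonal, keeps the measurement equalities intact, and lands the pair in the hypothesis $1=\norm{u}\geq\norm{v}>0$ of Lemma \ref{lemon}. The perturbation estimate itself is essentially bookkeeping once one observes that both witnesses may be taken of norm at most one, so that the single bound $\norm{P_n-Q_n}<\epsilon$ controls the change in each measurement by the same $\epsilon$.
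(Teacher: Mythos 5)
Your proposal is correct and takes essentially the same approach as the paper's proof: both reduce to orthogonal witnesses via Lemma \ref{orth}, invoke the uniform gap $\delta$ from Lemma \ref{lemon} after the same scale-and-swap normalization $1=\norm{u}\geq\norm{v}>0$, and close with the same triangle-inequality estimate using $\epsilon\leq\delta/2$. The only difference is presentational---you argue by contradiction where the paper argues directly that every orthogonal pair is separated by some $\norm{Q_nx}\neq\norm{Q_ny}$---and your observation that $\epsilon=\delta/2$ itself suffices (because the hypothesis $\norm{P_n-Q_n}<\epsilon$ is strict) is a harmless slight sharpening of the paper's $\epsilon<\delta/2$.
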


\begin{proof}
By Lemma \ref{orth}, it suffices to prove that for any nonzero $x\perp y$ there exists some $n$ such that $\norm{Q_nx} \neq \norm{Q_ny}$.  Take any nonzero $x\perp y$, and we may assume by scaling and switching $x$ and $y$ if necessary that $1=\norm{x}\geq \norm{y}>0$.  By Lemma \ref{lemon} there exists a $\delta>0$ such that
\[
	\abs{\norm{P_{n_0}x}-\norm{P_{n_0}y}}>\delta
\]
for some $n_0 \in \{1,.\ldots,N\}$.  Then
\begin{align*}
	\abs{\norm{Q_{n_0}x}-\norm{Q_{n_0}y}}	&\geq \abs{\norm{P_{n_0}x}-\norm{P_{n_0}y}}-\abs{\norm{Q_{n_0}x}-\norm{P_{n_0}x}}-\abs{\norm{P_{n_0}y}-\norm{Q_{n_0}y}}\\
																&> \delta - \norm{P_{n_0}x-Q_{n_0}x}-\norm{P_{n_0}y-Q_{n_0}y} \\
																&\geq \delta - 2\norm{P_{n_0}-Q_{n_0}}\\
																&> \delta - 2\epsilon \\
																&>0
\end{align*}
when $\epsilon<\delta/2$.
\end{proof}

\section{Open problems and future work} \label{sec4}
In this final section we highlight several open questions concerning phase retrieval from the norms of subspace components.  Some of these questions arose in earlier sections, and some we present here for the first time.  We give the main problem we wish to be answered and then incremental progress towards a solution.

\begin{problem}\label{P1}
What is the minimal number $N$ such that $\{W_n\}_{n=1}^N$ allow phase retrieval in $\RR^M$?  Does this number depend upon the dimensions of the subspaces?
  \end{problem}

Theorem \ref{phaseless} showed we can use $2M-1$ subspaces, but the characterization in Theorem \ref{main} may suggest we could possibly use fewer.  For example, in $\RR^4$, we can consider $2(4)-2=6$ subspaces $\{W_n\}_{n=1}^6$ each of dimension $2$.  Let's suppose we choose these subspaces so that $W_n\cap W_{n'}=0$ for all $n\neq n'$ so that any two subspaces span $\RR^4$.  If $\{W_n\}_{n=1}^N$ do not allow phase retrieval, there must exist orthonormal bases $\{\varphi_n,\psi_n\}$ for each $W_n$ such that $\cup_{n=1}^6\{\varphi_n,\psi_n\}$ does not posses the complement property.  We recall this means there exists a partition of $\cup_{n=1}^6\{\varphi_n,\psi_n\}$ into two non-spanning sets.  

The existence of such a set without the complement property can be reformulated into a statement about subspaces.  For any three dimensional subspace $Z\subset \RR^4$, notice $\dim (Z\cap W_n)\geq 1$ for all $n=1,\ldots,6$.  If we choose unit norm vectors $\{\varphi_n\}_{n=1}^6$ where $\varphi_n\in W_n\cap Z$, then these uniquely determine $\psi_n$ up to sign.  Letting $I\subseteq \{1,\ldots,6\}$ be such that $\psi_n \notin Z$ for $n\in I$, we may find orthonormal bases without the complement property if there exists a $Z$ such that
\[
	\dim \spann(\{\psi_n\}_{n\in I})\leq 3.
\]
Since any two $W_n$ span all of $\RR^4$, $Z$ can contain at most one complete pair $\{\varphi_n,\psi_n\}$ implying $\abs{I}\geq 5$.  It is a very strong property that these $5$ vectors all lie in a $3$-dimensional subspace, and it does not seem unreasonable that there could exist choices of $\{W_n\}_{n=1}^6$ such that this property does not hold for any $Z$.  That is, it does not appear unreasonable that phase retrieval could be accomplished with fewer than $2M-1$ subspaces.

Interestingly the characterization in Corollary \ref{char} suggests to the contrary.  That is, Corollary \ref{char} suggests phase retrieval in $\RR^M$ requires at least $2M-1$ subspaces.  Details may be found in \cite{BCMN} where the reasoning is similar to that for the ``$4M-4$ conjecture'' for complex phase retrieval.  Briefly, the intuition is that the rank $1$ and rank $2$ operators in $\HH^{M\times M}$ form a real projective variety of dimension $2M-1$.  Recall the operator $G$ from Lemma \ref{lemma1}. The projective dimension theorem and rank-nullity theorem would require the null space of $G$ to intersect non-trivially with this real projective variety when 
\[
	\dim(\mbox{Null}(G)) +(2M-1) > \dim(\HH^{M \times M})=\frac{M(M+1)}{2}.
\]

Hence to have a trivial intersection and thus injectivity of $G$ or equivalently allow phase retrieval then we need 
\[ 
\dim(\mbox{Null}(G)) +(2M-1) \leq \dim(\HH^{M \times M})
\]
Or equivalently, 
\[
(2M-1) \leq \dim(\HH^{M \times M}) - \dim(\mbox{Null}(G))= \mbox{rank}(G) \leq N.
\]

Thus to allow phase retrieval, $N\geq 2M-1$.  This intuition does not yield a proof since the projective dimension theorem does not apply over the non-algebraically closed field $\RR$.  This problem thus appears similar to the complex phase retrival problem where the minimal number of vectors required for phase retrieval has proven a very difficult problem.

\begin{problem}\label{P2}
 Can phase retrieval be done with random subspaces of $\RR^M$?
 \end{problem}

Theorem \ref{open} showed if $\{W_n\}_{n=1}^N$ allowed phase retrieval, then we may replace any $W_n$ with another subspace $W'_n$ from a small open ball around $W_n$ and this new collection of subspaces allow phase retrieval.  However, our concretely constructed sets $\{W_n\}_{n=1}^N$ which allow phase retrieval are highly structured according to Theorem \ref{phaseless}.  In the case of phase retrieval with vector measurements, the sets of vectors which allow phase retrieval form an open dense set.  It remains to show that we can still do phase retrieval when $W_n$ is replaced with any $W'_n$ from some open dense set in an analog to the one dimensional case.

First observe there exists subspaces $\{W_n\}_{n=1}^N$ allowing phase retrieval such that $\{W_n^\perp\}_{n=1}^N$ do not.
\begin{example}
Let $\{\varphi_n\}_{n=1}^3$ and $\{\psi_n\}_{n=1}^3$ be orthonormal bases for $\RR^M$ such that $\{\varphi_n\}_{n=1}^3 \cup \{\psi_n\}_{n=1}^3$ is full spark.  Consider the subspaces
\begin{align*}
	W_1&=\spann(\{\varphi_1,\varphi_3\}) &&W_1^\perp=\spann(\{\varphi_2\})\\
	W_2&=\spann(\{\varphi_2,\varphi_3\}) &&W_2^\perp=\spann(\{\varphi_1\})\\
	W_3&=\spann(\{\varphi_3\}) 					 &&W_3^\perp=\spann(\{\varphi_1,\varphi_2\})\\
	W_4&=\spann(\{\psi_1\}) 					 	 &&W_4^\perp=\spann(\{\psi_2,\psi_3\})\\
	W_5&=\spann(\{\psi_2\}) 						 &&W_5^\perp=\spann(\{\psi_1,\psi_3\}).
\end{align*}
Then $\{W_n\}_{n=1}^5$ allows phase retrieval for $\RR^3$ while the orthogonal complements $\{W_n^\perp\}_{n=1}^5$ do not.
\end{example}

To see this, notice the subspaces $\{W_n\}_{n=1}^5$ allow phase retrieval from a direct application of Theorem \ref{phaseless}.  Considering the orthogonal complements $\{W_n^\perp\}_{n=1}^5$ with associated orthogonal projections $\{Q_n\}_{n=1}^5$, notice $Q_1+Q_2=Q_3$.  Thus the measurement $\norm{Q_3x}^2$ does not contribute any new information (or think of $Q_3$ as a linearly dependent operator which when removed does not change the null space associated with $F$ as in Corollary \ref{char}).  Thus $\{W_n^\perp\}_{n=1}^5$ allows phase retrieval if and only if $\{W_n^\perp\}_{n\in \{1,2,4,5\}}$ allows phase retrieval.  However, for the special case of $\RR^3$, we can show $5$ subspaces are necessary.

Indeed, suppose we have any subspaces $\{W_n\}_{n=1}^N$ in $\RR^3$ and corresponding projections $\{P_n\}_{n=1}^N$ with $N\leq 4$.  Since $\dim (\HH^{3\times 3})= 6$, by the rank-nullity theorem, the dimension of the null space of $F$ is greater than or equal to $2$. Hence there exist two nonzero, linearly independent matrices $A,B \in \HH^{3 \times 3}$ such that $F(A)=F(B)=0$, where $F$ is the operator as defined in the begining of section 3.  If either matrix is rank $1$ or $2$, then $\{W_n\}_{n=1}^N$ in $\RR^3$ do not allow phase retrieval by Corollary \ref{char}.  So assume $A$ and $B$ are full rank and consider the continuous map
\[
	f:t\mapsto \det (A\cos t+B \sin t), \ \   t\in[0,\pi].
\]
Since $f(0)=\mbox{det}(A)\neq 0$ and $f(\pi)=\mbox{det}(-A)=(-1)^3\mbox{det}(A)=-\mbox{det}(A)\neq 0$, then by the intermediate value theorem there exists some $t_0 \in [0,\pi]$ such that $f(t_0)=\det(A\cos t_0+B \sin t_0)=0$.  Therefore $C:=A\cos t_0+B \sin t_0$ is a rank $1$ or $2$ matrix, such that $F(C)=0$. Note that $C \neq 0$ since $A$ and $B$ are nonzero, linearly independent matrices. Therefore $C$ is a nonzero, rank 1 or 2 matrix in the null space of $F$ and thus by Corollary \ref{char}, $\{W_n\}_{n=1}^N$ in $\RR^3$ again fails phase retrieval.

There are also special cases when phase retrieval is always possible with orthogonal complements.

\begin{theorem}\label{theorem10}
Suppose $\{W_n\}_{n=1}^N$ are subspaces of $\RR^M$ allowing phase retrieval with corresponding orthogonal projections $\{P_n\}_{n=1}^N$.  If $I\in \spann(\{P_n\}_{n=1}^N)$ so that $I=\sum_{n=1}^N a_nP_n$ with $\sum_{n=1}^N a_n \neq 1$, then $\{W_n^\perp\}_{n=1}^N$ allow phase retrieval.
\end{theorem}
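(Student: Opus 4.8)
The plan is to work entirely in the operator picture of Section 3 and apply the characterization in Corollary \ref{char}. The key observation is that the projection onto $W_n^\perp$ is $Q_n = I - P_n$, so if we let $F$ be the operator associated to $\{W_n\}_{n=1}^N$ (with $F(A)(n)=\inner{A}{P_n}_{HS}$) and $F^\perp$ the operator associated to $\{W_n^\perp\}_{n=1}^N$, then for every self-adjoint $A$ we have the identity
\[
	F^\perp(A)(n) = \inner{A}{I-P_n}_{HS} = \Tr(A) - F(A)(n).
\]
This linear relationship between the two operators is what connects phase retrieval for the subspaces to phase retrieval for their complements.

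I would argue by contradiction. Suppose $\{W_n^\perp\}_{n=1}^N$ does \emph{not} allow phase retrieval. Then by Corollary \ref{char} there exists a nonzero matrix $A\in \HH^{M\times M}$ of rank $1$ or $2$ with $F^\perp(A)=0$. By the displayed identity this forces $F(A)(n)=\Tr(A)$ for every $n=1,\ldots,N$; that is, $F(A)$ is the constant vector with every entry equal to $\Tr(A)$. The goal is now to show this is impossible given that $\{W_n\}_{n=1}^N$ allows phase retrieval, and this is exactly where the hypothesis $\sum_{n=1}^N a_n\neq 1$ enters.

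The decisive step is to pair the relation $I=\sum_{n=1}^N a_nP_n$ against $A$ in the Hilbert-Schmidt inner product. Since $\inner{A}{I}_{HS}=\Tr(A)$, linearity gives
\[
	\Tr(A) = \inner{A}{I}_{HS} = \sum_{n=1}^N a_n\inner{A}{P_n}_{HS} = \sum_{n=1}^N a_n\,\Tr(A) = \Tr(A)\sum_{n=1}^N a_n,
\]
so that $\Tr(A)\bigl(1-\sum_{n=1}^N a_n\bigr)=0$. Because $\sum_{n=1}^N a_n\neq 1$, we conclude $\Tr(A)=0$. Feeding this back into $F(A)(n)=\Tr(A)$ yields $F(A)(n)=0$ for all $n$, i.e. $A\in\Nul(F)$. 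But $A$ is a nonzero rank $1$ or $2$ matrix in the null space of $F$, which by Corollary \ref{char} contradicts the assumption that $\{W_n\}_{n=1}^N$ allows phase retrieval. Hence no such $A$ exists and $\{W_n^\perp\}_{n=1}^N$ allows phase retrieval.

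I do not expect a genuine obstacle here; the proof is short once the operator identity $F^\perp(A)=\Tr(A)-F(A)$ is written down. The only point requiring care is the role of the coefficient condition: the hypothesis $\sum a_n\neq 1$ is precisely what is needed to rule out the degenerate possibility $\Tr(A)\neq 0$, and I would emphasize that without it the constant vector $\Tr(A)\cdot\mathbf{1}$ could legitimately lie in the range of $F$, so the argument genuinely uses this assumption rather than it being cosmetic.
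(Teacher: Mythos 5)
Your proof is correct, but it takes a genuinely different route from the paper's. The paper argues directly and constructively: from the identity $\sum_{n=1}^N a_n(I-P_n)=\left(\left(\sum_{n=1}^N a_n\right)-1\right)I$ it sets $b=\left(\sum_{n=1}^N a_n\right)-1\neq 0$, writes $I=\sum_{n=1}^N\frac{a_n}{b}(I-P_n)$, and concludes that the complement measurements determine $\norm{x}^2=\sum_{n=1}^N\frac{a_n}{b}\norm{(I-P_n)x}^2$; each original measurement is then recovered via $\norm{P_nx}^2=\norm{x}^2-\norm{(I-P_n)x}^2$, so injectivity for $\{W_n^\perp\}_{n=1}^N$ follows at once from injectivity for $\{W_n\}_{n=1}^N$, with no appeal to the lifted characterization. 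You instead argue by contradiction in the operator space through Corollary \ref{char}, and your two key computations are sound: $F^\perp(A)(n)=\Tr(A)-F(A)(n)$, and pairing $A$ against $I=\sum_{n=1}^N a_nP_n$ to get $\Tr(A)\left(1-\sum_{n=1}^N a_n\right)=0$, hence $\Tr(A)=0$ and $A\in\Nul(F)$. Note that your trace computation is essentially the paper's identity tested against $A$ in the Hilbert--Schmidt inner product, so the hypothesis $\sum a_n\neq 1$ enters both proofs as the same algebraic fact in different clothing. What the paper's version buys is an explicit reconstruction recipe (first $\norm{x}^2$, then each $\norm{P_nx}^2$), which is in the spirit of its other constructive results such as Corollaries \ref{cor1} and \ref{com}. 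What yours buys is slightly more structure: your trace argument never uses the rank restriction on $A$, so you have in fact shown $\Nul(F^\perp)\subseteq\Nul(F)$ under the stated hypothesis (and the symmetric computation gives the reverse inclusion, so the null spaces coincide), from which the theorem is immediate by Corollary \ref{char}; it would strengthen your write-up to state this containment explicitly rather than restricting attention to rank $1$ or $2$ from the outset.
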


\begin{proof}
Note
\[
	\sum_{n=1}^N a_n(I-P_n)=\left(\sum_{n=1}^Na_nI\right)-I=\left(\left(\sum_{n=1}^Na_n \right)-1\right)I,
\]
so letting $b=\left(\sum_{n=1}^Na_n\right)-1$, we have $I=\sum_{n=1}^N\frac{a_n}{b}(I-P_n)$.  Thus the measurements $\norm{(I-P_n)x}$ associated with $\{W_n^\perp\}_{n=1}^N$ allow one to determine
\[
	\norm{x}^2=\inner{xx^*}{I}_{HS}=\inner{xx^*}{\sum_{n=1}^N\frac{a_n}{b}(I-P_n)}_{HS}=\sum_{n=1}^N\frac{a_n}{b}\inner{xx^*}{I-P_n}_{HS}=\sum_{n=1}^N\frac{a_n}{b}\norm{(I-P_n)x}^2.
\]
Since $\{W_n\}_{n=1}^N$ allow phase retrieval and $\norm{P_nx}^2=\norm{x}^2-\norm{(I-P_n)x}^2$, it follows that $\{W_n^\perp\}_{n=1}^N$ allows phase retrieval.
\end{proof}

Notice that when dim$(W_n)=1$ for all $n=1,\ldots,N$, Theorem \ref{theorem10} classifies when hyperplanes allow phase retrieval. In light of these unstructured hyperplanes allowing phase retrieval, it is interesting to note that given subspaces $\{W_n\}_{n=1}^N$ which do not allow phase retrieval in $\RR^M$, it is always possible to find hyperplanes $\{W'_n\}_{n=1}^N$ such that $W_n\subseteq W'_n$ where $\{W'_n\}_{n=1}^N$ do not allow phase retrieval.

\begin{proposition}
If the subspaces $\{W_n\}_{n=1}^N$ do not allow phase retrieval in $\RR^M$ then there exists $\{W'_n\}_{n=1}^N$ not allowing phase retrieval where $\dim W'_n=M-1$ and $W_n\subseteq W'_n$ for all $n=1,\ldots,N$.
\end{proposition}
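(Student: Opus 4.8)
The plan is to produce the enlarged subspaces so that a single fixed orthogonal pair still defeats phase retrieval. First I would invoke Lemma \ref{orth}: since $\{W_n\}_{n=1}^N$ does not allow phase retrieval, there exist nonzero $u\perp v$ in $\RR^M$ with $\norm{P_nu}=\norm{P_nv}$ for every $n$. (Throughout I assume each $\dim W_n\le M-1$, since otherwise no hyperplane can contain $W_n$.) Because $u$ and $v$ are nonzero and orthogonal we have $u\neq\pm v$, so it suffices to build hyperplanes $W'_n\supseteq W_n$, with orthogonal projections $Q_n$, for which $\norm{Q_nu}=\norm{Q_nv}$ for all $n$; this same pair then witnesses that $\{W'_n\}_{n=1}^N$ fails phase retrieval.

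Next I would parametrize each candidate hyperplane as $W'_n=z_n^\perp$ for a unit vector $z_n$, noting that the containment $W_n\subseteq W'_n$ is equivalent to $z_n\in W_n^\perp$. Since $Q_n=I-z_nz_n^*$, one has $\norm{Q_nw}^2=\norm{w}^2-\abs{\inner{w}{z_n}}^2$, and because $z_n\perp W_n$ we may replace $\inner{w}{z_n}$ by $\inner{(I-P_n)w}{z_n}$. Writing $a_n=(I-P_n)u$ and $b_n=(I-P_n)v$ (both lying in $W_n^\perp$) and setting $c=\norm{u}^2-\norm{v}^2$, the requirement $\norm{Q_nu}=\norm{Q_nv}$ reduces to finding a unit vector $z_n\in W_n^\perp$ with
\[
\abs{\inner{a_n}{z_n}}^2-\abs{\inner{b_n}{z_n}}^2=c .
\]

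The crux is solving this equation for each $n$, and here the hypothesis enters decisively. Because $\norm{P_nu}=\norm{P_nv}$, we get $\norm{a_n}^2-\norm{b_n}^2=\norm{u}^2-\norm{v}^2=c$, so $c=\Tr(T_n)$ for the symmetric operator $T_n=(a_na_n^*-b_nb_n^*)|_{W_n^\perp}$. As a difference of two rank-one positive-semidefinite operators, $T_n$ has at most one positive and at most one negative eigenvalue; hence its extreme eigenvalues satisfy $\mu_{\max}+\mu_{\min}=\Tr(T_n)=c$, and (when $\dim W_n^\perp\ge2$) also $\mu_{\max}\ge0\ge\mu_{\min}$, so that $c\in[\mu_{\min},\mu_{\max}]$. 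Since the Rayleigh quotient $z\mapsto\abs{\inner{a_n}{z}}^2-\abs{\inner{b_n}{z}}^2$ sweeps out exactly the interval $[\mu_{\min},\mu_{\max}]$ as $z$ ranges over the unit sphere of $W_n^\perp$, the intermediate value theorem produces a unit $z_n\in W_n^\perp$ attaining $c$. The degenerate configurations ($a_n$ or $b_n$ zero, $a_n,b_n$ parallel, or $\dim W_n^\perp=1$, in which last case $W'_n=W_n$ and the single eigenvalue of $T_n$ is automatically $\norm{a_n}^2-\norm{b_n}^2=c$) are all covered by the same trace-and-signature bookkeeping. Setting $W'_n=z_n^\perp$ then yields hyperplanes containing $W_n$ with $\norm{Q_nu}=\norm{Q_nv}$ for every $n$, so by Lemma \ref{orth} (or Corollary \ref{char}) the family $\{W'_n\}_{n=1}^N$ does not allow phase retrieval.

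I expect the main obstacle to be precisely the inclusion $c\in[\mu_{\min},\mu_{\max}]$: it rests on matching the trace of the indefinite form $T_n$ with the target value $c$ (forced by $\norm{P_nu}=\norm{P_nv}$) together with the one-positive/one-negative signature of a rank-two difference of rank-one projectors, which is exactly what makes the quadratic equation solvable on the sphere. Everything before and after that step — reducing to an orthogonal pair, the projection identities, and the final conclusion — is routine.
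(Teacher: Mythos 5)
Your proof is correct, and it takes a genuinely different route from the paper's. The paper never parametrizes the hyperplane by its normal vector: instead it enlarges each $W_n$ \emph{one dimension at a time}, choosing two orthonormal vectors in $W_n^\perp$ spanning a plane $Z$, setting $u=P_Zx$, $v=P_Zy$ for the witness pair $x\neq\pm y$, and applying the intermediate value theorem to the scalar function $f(z)=\abs{\inner{u}{z}}-\abs{\inner{v}{z}}$ on the unit circle of $Z$ (between a $z_1\perp u$ and a $z_2\perp v$) to find $z_0$ with $\abs{\inner{x}{z_0}}=\abs{\inner{y}{z_0}}$; then $W'_n=\spann(\{W_n,z_0\})$ still satisfies $\norm{P'_nx}=\norm{P'_ny}$, and this is iterated until the dimension reaches $M-1$. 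So the paper preserves equality of measurements at every one-dimensional enlargement, each added direction contributing equal amounts to both sides, whereas you jump directly to the hyperplane and make its normal $z_n\in W_n^\perp$ absorb the entire discrepancy $c=\norm{u}^2-\norm{v}^2$ in one quadratic equation. Your solvability argument checks out: $T_n=a_na_n^*-b_nb_n^*$ restricted to $W_n^\perp$ has rank at most $2$ with at most one positive and one negative eigenvalue (Weyl), so all other eigenvalues vanish and $\mu_{\max}+\mu_{\min}=\Tr(T_n)=\norm{a_n}^2-\norm{b_n}^2=c$, which is exactly where the hypothesis $\norm{P_nu}=\norm{P_nv}$ enters; together with $\mu_{\max}\geq 0\geq\mu_{\min}$ this gives $c\in[\mu_{\min},\mu_{\max}]$, the numerical range of $T_n$ on the (connected, when $\dim W_n^\perp\geq 2$) unit sphere, and your degenerate cases, including $\dim W_n^\perp=1$ where the form is constantly $c$, are handled consistently. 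Comparing the two: the paper's iteration uses only an elementary IVT on a circle and incidentally produces failing families of every intermediate dimension between $\dim W_n$ and $M-1$, while your one-shot construction is non-iterative and isolates the structural reason the hyperplane equation is always solvable (trace pinned to the target, signature at most $(1,1)$). Two cosmetic notes: your reduction via Lemma \ref{orth} to an orthogonal pair $u\perp v$ is harmless but unnecessary, since any witness pair $x\neq\pm y$ works in your scheme exactly as in the paper's; and both proofs share the implicit standing assumption $\dim W_n\leq M-1$ (otherwise no hyperplane contains $W_n$), which you rightly flag and the paper leaves tacit.
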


\begin{proof}
Since $\{W_n\}_{n=1}^N$ do not allow phase retrieval, there exists nonzero $x,y \in \RR^M$ such that $x\neq \pm y$ and $\norm{P_nx}=\norm{P_ny}$ for all $n=1,\ldots,N$.  For $n$ such that $\dim W_n= M-1$, let $W'_n=W_n$.   For any other $n$, say $\dim W_n=D_n\leq M-2$, we construct $W'_n$ as follows.  Let $\{\varphi_1,\varphi_2\}$ be orthonormal vectors in $W_n^\perp$.  Let $Z:=\spann(\{\varphi_{1},\varphi_{2}\})$, with $P_Z:\RR^M\rightarrow Z$ an orthogonal projection.  Set
\[
	u:= P_Z x \ \ \mbox{and} \ \ v:=P_Z y,
\]
and consider the function $f:Z\rightarrow \RR$ given by 
\[
	f(z)=\abs{\inner{u}{z}}-\abs{\inner{v}{z}}.
\]
Let $z_1,z_2\in Z$ be unit norm vectors such that $z_1\perp u$ and $z_2 \perp v$.  Then  $f(z_1)\leq 0\leq f(z_2)$, and by the intermediate value theorem, there exists a $z_0\in Z$ where $f(z_0)=0$ and hence $\abs{\inner{u}{z_0}}=\abs{\inner{v}{z_0}}$.  We assume $z_0\neq 0$.  Note if $z_0=0$, then $z_1=-z_2$, $f(z_1)=0=f(z_2)$, and we could instead choose $z_0=z_1\neq 0$.
Letting $W'_n=\spann(\{W_n,z_0\})$ with corresponding orthogonal projection $P_n'$, we have
\begin{align*}
	\norm{P'_nx}^2	&=\norm{P_nx}^2+\abs{\inner{x}{\frac{z_0}{\norm{z_0}}}}^2 
									= \norm{P_nx}^2+\abs{\inner{x}{P_Z\frac{z_0}{\norm{z_0}}}}^2
									=\norm{P_nx}^2+\abs{\inner{u}{\frac{z_0}{\norm{z_0}}}}^2\\
									&=\norm{P_ny}^2+\abs{\inner{v}{\frac{z_0}{\norm{z_0}}}}^2
									=\norm{P_ny}^2+\abs{\inner{y}{P_Z\frac{z_0}{\norm{z_0}}}}^2
									=\norm{P'_ny}^2.
\end{align*}
It follows that $\{W'_n\}_{n=1}^N$ do not allow phase retrieval.  We now may iterate this argument until $\dim W'_n = M-1$ for all $n=1,\ldots N$.

\end{proof}

\begin{problem}\label{P4}
Show random subspaces or find examples of non-structured subspaces of arbitrary dimension which allow phase retrieval.
\end{problem}

For the special case of hyperplanes, Theorem \ref{theorem10} allows us to construct highly non-structured subspaces which allow phase retrieval.

It is known that for any $N>M$, the full spark families of vectors $\{\varphi_n\}_{n=1}^N$ are a
dense, open set of full measure
 within the families of vectors $\{\varphi_n\}_{n=1}^N$ such that $\sum_{n=1}^N \varphi_n\varphi_n^*=I$ \cite{CK}.  
 From here it is easy to construct full spark vectors $\{\varphi_n\}_{n=1}^N$ in $\RR^M$ with $\sum_{n=1}^N \varphi_n\varphi_n^*=I$ such that no two vectors are orthogonal.  It follows that $\{\frac{\varphi_n}{\norm{\varphi_n}\}}\}_{n=1}^N$ is
 full spark and thus allows phase retrieval.  Letting $P_n$ be the orthogonal projection onto $\spann(\varphi_n)$, we have $P_n=\frac{\varphi_n \varphi^*_n}{\norm{\varphi_n}^2}$ so
\[ 
	\sum_{n=1}^N \norm{\varphi_n}^2 P_n = I,
\]
and by Theorem \ref{theorem10}, it follows that $\{W_n = (I-P_n)\RR^M\}_{i=1}^N$ is a family of hyperplanes allowing phase retrieval.  These are {\it unstructured} since structured subspaces would have the property that their orthogonal complements contain a large number of orthogonal vectors.

\begin{remark}
The example above is not an equal norm Parseval frame.  The problem here is that
we do not know if the full spark equal norm Parseval frames are dense in the
equal norm Parseval frames (See \cite{CK}).  However, in concrete cases we can achieve this.
For example, in $\RR^2$ choose 5 equally spaced vectors $\{x_i\}_{i=1}^5=\{(x_{i1},x_{i2})\}_{i=1}^5$
on a circle of radius $\sqrt{\frac{2}{5}}$ and let $z^2=\frac{1}{5}$. Then the
vectors in $\RR^3$ given by $\{(x_{i1},x_{i2},z)\}_{i=1}^5$ form a full spark Parseval frame
for $\RR^3$ and hence the above example works.
\end{remark}

\begin{remark}
The above example can be generalized to Parseval fusion frames.  However, 
much less is known about Parseval fusion frames with arbitrary dimensional
subspaces.  We do not want
to introduce this subject here and this will be covered in a later paper.
\end{remark}

The following problem is also open at this time.

\begin{problem}\label{P5}
Find examples of (classify) the subspaces $\{W_n\}_{n=1}^N$ which allow phase retrieval but the span of their associated projections $\{P_n\}_{n=1}^N$ is not equal to the span of any $N$ rank one projections.
\end{problem}

Finally, in light of Corollary \ref{com} it is natural to ask:

\begin{problem}
For the complex case, is the minimal number of projections of arbitrary rank needed for 
 phase retrieval less than or equal to the minimal number
of rank one projections needed for phase retrieval?
\end{problem}

\end{document}